\newcommand{\be}{\begin{equation}}
\newcommand{\ee}{\end{equation}}
\newcommand{\beano}{\begin{eqn*}} 
	\newcommand{\eeano}{\end{eqnarray*}}
\newcommand{\ba}{\begin{array}}
	\newcommand{\ea}{\end{array}}
\declaretheoremstyle[headfont=\normalfont]{normalhead}
\newtheorem{theorem}{Theorem}[section]
\newtheorem{lemma}[theorem]{Lemma}
\newtheorem{corollary}[theorem]{Corollary}
\newtheorem{proposition}[theorem]{Proposition}
\theoremstyle{definition}
\newtheorem{example}[theorem]{Example}
\newcommand{\rd}{\mathrm{rd}}
\newcommand{\ed}{\operatorname{ed}}
\newcommand{\rank}{\operatorname{rank}}
\newcommand{\Sym}{\operatorname{S}}
\newcommand{\Alt}{\operatorname{A}}
\newcommand{\GL}{\operatorname{GL}}
\newcommand{\PSL}{\operatorname{PSL}}
\newcommand{\SL}{\operatorname{SL}}
\numberwithin{equation}{section}
\begin{document}
\title{Essential Dimension of Small Finite Groups}
\author{Dilpreet Kaur}
\email{dilpreetkaur@iitj.ac.in}
\address{Department of Mathematics, Indian Institute of Technology Jodhpur, INDIA}
\thanks{Dilpreet Kaur was supported by International Mobility Research Grant IRMG/DPK/20220036 from Indian Institute of Technology Jodhpur, India}

\author{Zinovy Reichstein}
\email{reichst@math.ubc.ca}
\address{Department of Mathematics, University of British Columbia, Vancouver, CANADA}
\thanks{Zinovy Reichstein was partially supported by an Individual Discovery Grant RGPIN-2023-03353 from the
	National Sciences and Engineering Research Council of
	Canada.}

\subjclass[2010]{20C15, 14L30}
% 20C (1973-now) Representation theory of groups 
% 20C15 (1973-now) Ordinary representations and characters
% 14L30 (1980-now) Group actions on varieties or schemes (quotients)
\keywords{Finite groups of small order, representation dimension, essential dimension}

\begin{abstract} We compute the essential dimension of finite groups of order $\leqslant 63$.
\end{abstract}

\maketitle
\section{Introduction}

Let $G$ be a finite group. The {\em representation dimension} $\rd(G)$ is the minimal dimension of a faithful complex 
linear representation of $G$, i.e., the smallest positive integer $r$ such that $G$ is isomorphic to 
a subgroup of $\GL_r(\mathbb C)$. The {\em essential dimension} $\ed(G)$ is the minimal dimension of a faithful linearizable $G$-variety. Here by a faithful $G$-variety we mean an algebraic complex variety $X$ with a faithful action of $G$.
We say that $X$ is linearizable if there exists a $G$-equivariant dominant rational map $V \dasharrow X$, 
where $V$ is a finite-dimensional complex vector space with a linear action for $G$. 

Both the representation dimension $\rd(G)$ and the essential dimension $\ed(G)$ are interesting numerical invariants 
of $G$. Of the two, the representation dimension is more accessible. Indeed, if one knows the irreducible representations of $G$, then one can figure out how to put them together
to obtain a faithful representation of minimal dimension, at least in principle. In practice this is not always straightforward and $\rd(G)$ has been the subject of several 
recent papers~\cite{mr2, bardestani, bardestani2, moreto, knight}. For small groups, representation dimension can be easily calculated using computer algebra software GAP. The code can be found in ~\cite{KKS23}.

It is clear from the definitions of representation dimension and essential dimension that 
\begin{equation} \label{e.ed-rd} \ed(G) \leqslant \rd(G). 
\end{equation}
If $G$ is abelian, then equality holds; see Lemma~\ref{lem.first}(b). If $G$ is a $p$-group, then equality also holds by the Karpenko-Merkurjev Theorem; see Theorem~\ref{p-groups}.
For other finite groups $G$, the problem of computing $\ed(G)$ is largely open.
For example, the exact value of the essential dimension $\ed(\Sym_n)$ of the symmetric group $\Sym_n$ is only known for $n \leqslant 8$; see~\cite[Section 3i]{merkurjev-survey}. The essential dimension of the semidirect product of two cyclic groups of relatively prime order is only known in a handful of small special cases; cf.~\cite[Corollary 4.4, Remarks 4.5 and 4.6]{brv-mixed}.

The purpose of this paper is to compute $\ed(G)$ for finite groups $G$ 
of small order, up to $63$. The resulting values are tabulated in 
Sections~\ref{sect.<32}-\ref{sect.56-63}. Note that we only considered non-abelian groups, since
for abelian groups the situation is very simple; see~Lemma~\ref{lem.first}(b).
Sections~\ref{sect.prel}-\ref{sect.prel48} contain the background material used to justify the values for the essential dimension in our tables.

As we mentioned above, the problem of computing essential dimension of an arbitrary finite group is wide open. One of the questions motivating this paper was to find out at what point 
the existing methods break down. It appears that this happens for the first time when the order of the group is $54$. In fact, there are three groups in our tables, with GAP Ids $(54, 5)$, $(55, 1)$ and $(56, 11)$, where 
we have not been able to determine the exact value of the essential dimension. We give upper and lower bounds in these three cases.

Another motivating question was to investigate the ratio $\displaystyle \frac{\ed(G)}{\rd(G)}$. 
As mentioned above, this ratio is $\leqslant 1$ in general. Equality holds when $G$ is either an abelian group or a $p$-group. If $G$ ranges over all finite groups, this ratio can be arbitrarily small; see Corollary~\ref{cor.ratio}.
Let $\alpha(n)$ be the minimal value of $\displaystyle 
\frac{\ed(G)}{\rd(G)}$, as $G$ ranges over all finite groups of order $\leqslant n$.
It follows from~\cite[Theorem 2]{reichstein-jordan} that
$\displaystyle \alpha(n) \geqslant \frac{1}{j(n)}$, where $j(n)$ is the uniform Jordan constant
for the groups of birational automorphism $\operatorname{Bir}(X)$, as $X$ ranges over all complex rationally connected 
$n$-dimensional varieties. The existence of $j(n)$ is due to Birkar~\cite[Corollary 1.3]{birkar}. 
% cf. also~\cite[Theorem 1]{reichstein-jordan}.
The precise value of $j(n)$ is not known even for small $n$
(see Prokhorov and Shramov~\cite{jordan-constant}); for general $n$ it appears to be far out of reach.
Moreover, even if $j(n)$ were known for all $n$,
it is likely that $\alpha(n)$ is much larger than $\displaystyle \frac{1}{j(n)}$. 
 
Our tables show that $\alpha(63)$ is $\displaystyle \frac{1}{3}$. This value is attained by the group $C_7 \rtimes C_6$ with GAP Id $(42, 1)$. 
Note that this group is constructed following the recipe in the proof of Corollary~\ref{cor.ratio} with $r = 2$ 
and $q = 7$.

\section{Notational conventions}
\label{sect.notation}

\begin{center}
\begin{tabular}{p{1.5cm}p{0.5 cm}p{11cm}}
  $C_n$   & : &  cyclic group of order $n.$ \\
  $D_n$   & : &  dihedral group of order $n.$\\
  $Q_n$   & : &  quaternion  group of order $n.$\\
  $QD_n$   & : & quasi-dihedral group of order $n.$\\
  $\Sym_n$   & : &  symmetric group of degree $n.$\\
  $\Alt_n$ & : & alternating group of degree $n.$\\
  $\GL_n(\mathbb{F}_q)$ & : &  general linear group of $n\times n$ matrices over the field containing $q$ elements.\\
 $\SL_n(\mathbb{F}_q)$ & : &  special linear group of $n\times n$ matrices over the field containing $q$ elements.\\
 $\PSL_n(\mathbb{F}_q)$ & : &  projective special linear group of $n\times n$ matrices over the field containing $q$ elements. \\
   $G \cdot H$ & : & extension of $G$ by $H$. \\
 $G\rtimes H$ & : & semidirect product of groups $G$ and $H$. \\
  $G \times H$ & : & direct product of groups $G$ and $H$.
\end{tabular}
\end{center}

\newpage
\section{Non-abelian groups of order $\leqslant 31$}
\label{sect.<32}

\begin{center}
\begin{tabular}{ |p{1.5cm}|p{4cm}|p{0.5 cm}|p{0.5 cm}|p{7cm}| } 
 \hline
 GAP Id & Structure Description & RD & ED & Explanation  \\ 
 \hline\hline
 (6,1) & $D_6$ & 2 & 1 & Theorem~\ref{ED=1}(a) \\ 
 \hline
 (8,3) & $D_8$ & 2 & 2 & Theorem~\ref{ED=1}(b) \\ 
 \hline
 (8,4) & $Q_8$ & 2 & 2 & Theorem~\ref{ED=1}(b) \\ 
 \hline
 (10,1) & $D_{10}$ & 2 & 1 & Theorem~\ref{ED=1}(a) \\ 
 \hline
 (12,1) & $C_3 \rtimes C_4$ & 2 & 2 & Theorem~\ref{ED=1}(a)  \\ 
 \hline
 (12,3) & $\operatorname{A}_4$ & 3 & 2 & Subgroup of $\operatorname{S}_5$ and Theorem~\ref{ED=2}(vii)\\ 
 \hline
 (12,4) & $D_{12}$ & 2 & 2 & Theorem~\ref{ED=1}(b) \\ 
 \hline
 (14,1) & $D_{14}$ & 2 & 1 & Theorem~\ref{ED=1}(a) \\ 
 \hline
 (16,3) & $(C_4 \times C_2) \rtimes C_2$ & 3 & 3 &  Theorem~\ref{p-groups}\\ 
 \hline
 (16,4) & $C_4 \rtimes C_4$ & 3 & 3 &  Theorem~\ref{p-groups}\\ 
 \hline
 (16,6) & $C_8 \rtimes C_2$ & 2 & 2 &  Theorem~\ref{ED=1}(b)\\ 
 \hline
 (16,7) & $D_{16}$ & 2 & 2 & Theorem~\ref{ED=1}(b) \\ 
 \hline
 (16,8) & $QD_{16}$ & 2 & 2 & Theorem~\ref{ED=1}(b) \\ 
 \hline
 (16,9) & $Q_{16}$ & 2 & 2 & Theorem~\ref{ED=1}(b) \\ 
 \hline
  (16,11) & $C_2 \times D_8$ & 3 & 3 &  Theorem~\ref{p-groups} \\ 
 \hline
 (16,12) & $C_2 \times Q_8$ & 3 & 3 & Theorem~\ref{p-groups}\\ 
 \hline
 (16,13) & $(C_4 \times C_2) \rtimes C_2$ & 2 & 2 & Theorem~\ref{ED=1}(b) \\ 
 \hline
 (18,1) & $D_{18}$ & 2 & 1 & Theorem~\ref{ED=1}(a) \\ 
 \hline
 (18,3) & $C_3 \times D_6$ & 2 & 2 & Theorem~\ref{ED=1}(b) \\ 
 \hline
 (18,4) & $(C_3 \times C_3)\rtimes C_2$ & 4 & 2 & Subgroup of $(C_3 \times C_3) \rtimes \mathcal{G}_2$ and Theorem~\ref{ED=2}(iii)\\ 
 \hline
 (20,1) &  $C_5 \rtimes C_4$ & 2 & 2 & Theorem~\ref{ED=1}(b) \\ 
 \hline
 (20,3) & $C_5 \rtimes C_4$ & 4 & 2 & Subgroup of $\operatorname{S}_5$ and Theorem~\ref{ED=2}(vii)\\ 
 \hline
 (20,4) & $D_{20}$ & 2 & 2 & Theorem~\ref{ED=1}(b) \\ 
 \hline
 (21,1) & $C_7 \rtimes C_3$ & 3 & 2 & Subgroup of $\operatorname{PSL}_2(\mathbb{F}_7)$ and Theorem~\ref{ED=2}(vi)\\ 
 \hline
 (22,1) & $D_{22}$ & 2 & 1 & Theorem~\ref{ED=1}(a) \\ 
 \hline
 (24,1) & $C_3 \rtimes C_8$ & 2 & 2 & Theorem~\ref{ED=1}(b) \\ 
 \hline
 (24,3) & $\operatorname{SL}_2(\mathbb{F}_3)$ & 2 & 2 & Theorem~\ref{ED=1}(b) \\ 
 \hline
 (24,4) & $C_3 \rtimes Q_8$ & 2 & 2 & Theorem~\ref{ED=1}(b) \\ 
 \hline
 (24,5) & $C_4 \times \operatorname{S}_3$ & 2 & 2 & Theorem~\ref{ED=1}(b) \\ 
 \hline
 (24,6) & $D_{24}$ & 2 & 2 & Theorem~\ref{ED=1}(b) \\ 
 \hline
 (24,7) & $C_2 \times (C_3 \rtimes C_4)$ & 3 & 3 &  Center is non-trivial and Prop. \ref{Prop.non-trivial-center} \\ 
 \hline
 (24,8) & $(C_6 \times C_2) \rtimes C_2$ & 2 & 2 & Theorem~\ref{ED=1}(b) \\ 
 \hline
 (24,10) & $C_3\times D_8$ & 2 & 2 & Theorem~\ref{ED=1}(b) \\ 
 \hline
 (24,11) & $C_3 \times Q_8$ & 2 & 2 & Theorem~\ref{ED=1}(b) \\ 
 \hline
 (24,12) & $\operatorname{S}_4$ & 3 & 2 &  Subgroup of $\operatorname{S}_5$ and Theorem~\ref{ED=2}(vii)\\
 \hline
 (24,13) & $C_2 \times \operatorname{A}_4$ & 3 & 3 & Center is non-trivial and  Prop. \ref{Prop.non-trivial-center}\\ 
 \hline
 (24,14) &$C_2 \times C_2 \times \operatorname{S}_3$ & 3 & 3 &  Center is non-trivial and  Prop. \ref{Prop.non-trivial-center} \\ 
 \hline
 (26,1) &  $D_{26}$ & 2 & 1 & Theorem~\ref{ED=1}(a) \\ 
 \hline
 (27,3) & $ (C_3 \times C_3 )\rtimes C_3$ & 3 & 3 & Theorem~\ref{p-groups}\\ 
 \hline
 (27,4) & $ C_9 \rtimes C_3$ & 3 & 3  & Theorem~\ref{p-groups}\\ 
 \hline
 (28,1) & $ C_7 \rtimes C_4$ & 2 & 2 & Theorem~\ref{ED=1}(b)  \\ 
 \hline
 (28,3) & $D_{28}$ & 2 & 2 & Theorem~\ref{ED=1}(b) \\ 
 \hline
 (30,1) & $C_5\times D_6$ & 2 & 2 & Theorem~\ref{ED=1}(b) \\ 
 \hline
 (30,2) & $C_3 \times D_{10}$ & 2 & 2 & Theorem~\ref{ED=1}(b) \\ 
 \hline 
 (30,3) & $D_{30}$ & 2 & 1 & Theorem~\ref{ED=1}(a) \\ 
 \hline
\end{tabular}
\end{center}

 \section{Non-abelian groups of order 32}
 \label{sect.32}
 
\begin{center}
\begin{tabular}{ |p{1.5cm}|p{4.1cm}|p{1.2cm}||p{1.5 cm}|p{3.6cm}|p{1.2cm}|} 
 \hline
 GAP Id & Structure Description & Rd=Ed & GAP Id & Structure Description & Rd=Ed \\ 
 \hline\hline
(32, 2) & $ (C_4 \times C_2) \rtimes C_4 $ & 4 &  (32, 4) & $C_8 \rtimes C_4 $ & 3\\
\hline
(32,5) & $ (C_8 \times C_2) \rtimes C_2 $  & 3 & (32, 6) & $ (C_2 \times C_2 \times C_2) \rtimes C_4 $ & 4 \\
\hline
(32, 7) & $ (C_8 \rtimes C_2) \rtimes C_2 $  & 4 & (32, 8) & $ (C_2 \times C_2) \cdot (C_4 \times C_2)$ & 4 \\
\hline
(32,9) & $ (C_8 \times C_2) \rtimes C_2 $   & 3 & (32, 10) & $ Q_8 \rtimes C_4 $ & 3\\
\hline
(32, 11) & $ (C_4 \times C_4) \rtimes C_2 $ & 2 &  (32, 12) &  $C_4 \rtimes C_8 $& 3 \\
\hline
(32,13) & $C_8 \rtimes C_4 $& 3 & (32, 14) & $C_8 \rtimes C_4 $ & 3\\
\hline
(32, 15) & $C_4 \cdot D_8 $ & 2 &  (32, 17) & $C_{16}\rtimes C_2$ & 2 \\
\hline
(32, 18) & $D_{32}$ & 2 & (32, 19) & $QD_{32}$ & 2\\
\hline
(32, 20) & $Q_{32}$ & 2 &(32, 22) & $C_2 \times ((C_4 \times C_2)\rtimes C_2)$&  4 \\
\hline
(32,23) & $C_2 \times (C_4 \rtimes C_4) $ & 4 &(32, 24) & $(C_4 \times C_4) \rtimes C_2$& 3\\
\hline
(32, 25) & $C_4 \times D_8$& 3 & (32, 26) & $C_4 \times Q_8$& 3 \\
\hline
(32,27) & $ (C_2 \times C_2 \times C_2\times C_2) \rtimes C_2 $ & 4 & (32, 28) & $ (C_4 \times C_2 \times C_2) \rtimes C_2 $& 4\\
\hline
(32, 29) & $ (C_2 \times Q_{16}) \rtimes C_2 $& 4 & (32, 30) &$ (C_4 \times C_2 \times C_2) \rtimes C_2 $ & 4\\
\hline
(32,31) & $ (C_4 \times C_4 ) \rtimes C_2 $ & 4 & (32, 32) & $ (C_2 \times C_2 ) \cdot (C_2 \times C_2 \times C_2) $& 4 \\
\hline
(32, 33) & $ (C_4 \times C_4 ) \rtimes C_2 $ & 4 & (32, 34) & $ (C_4 \times C_4 ) \rtimes C_2 $ & 4\\
\hline
(32,35) & $C_4 \rtimes Q_8$ & 4 & (32, 37) & $C_2 \times (C_8 \rtimes C_2)$& 3\\
\hline
(32, 38) & $(C_8 \times C_2) \rtimes C_2$ & 2 & (32, 39) & $C_2 \times D_{16}$& 3\\
\hline
(32,40) & $C_2 \times QD_{16}$& 3 & (32, 41) & $C_2 \times Q_{16}$& 3\\
\hline
(32, 42) & $(C_8 \times C_2)\rtimes C_2$ & 2 & (32, 43) &$C_8 \rtimes (C_2 \times C_2) $ & 4\\
\hline
(32,44) & $(C_2 \times Q_{16}) \rtimes C_2$& 4 &(32, 46) & $C_2 \times C_2 \times D_8$& 4\\
\hline
(32, 47) &$C_2 \times C_2 \times Q_8$ & 4 & (32, 48) & $C_2 \times ((C_4 \times C_2)\rtimes C_2)$ & 3\\
\hline
(32,49) & $ (C_2 \times C_2 \times C_2) \rtimes (C_2\times C_2) $ & 4 & (32, 50) & $(C_2 \times Q_2) \rtimes C_2$ & 4\\
\hline
 \end{tabular}
\end{center}

\section{Non-abelian groups of order 33 - 47}
\label{sect.33-47}

\begin{center}
\begin{tabular}{ |p{1.5cm}|p{4cm}|p{0.5cm}|p{0.5 cm}|p{7cm}| } 
 \hline
 GAP Id & Structure Description & Rd & Ed & Explanation \\ 
 \hline\hline
 (34,1) & $D_{34} $ &  2 & 1 & Theorem~\ref{ED=1}(a) \\ 
 \hline
 (36,1) & $C_9 \rtimes C_4$ & 2 & 2 & Theorem~\ref{ED=1}(b) \\ 
 \hline
 (36,3) & $(C_2 \times C_2) \rtimes C_9 $ & 3 & 3 &  Center is non-trivial and  Prop. \ref{Prop.non-trivial-center}\\ 
 \hline
 (36,4) & $D_{36}$ & 2 & 2 & Theorem~\ref{ED=1}(b) \\ 
 \hline
 (36,6) & $C_3 \times (C_3 \rtimes C_4) $ & 2 & 2 &  Theorem~\ref{ED=1}(b)  \\ 
 \hline
 (36,7) & $(C_3 \times C_3) \rtimes C_4$ & 4 & 3 & Proposition~\ref{prop.product}(a) with $A \simeq C_2$. Note that $G/A\simeq (C_3 \times C_3) \rtimes C_2$ has GAP Id (18, 4), trivial center, and essential dimension $2$.   \\ 
 \hline
 (36,9) & $(C_3 \times C_3) \rtimes C_4$ & 4 & 2 & Subgroup of $(C_3 \times C_3) \rtimes \mathcal{G}_2$ and Theorem~\ref{ED=2}(iii)\\ 
 \hline
 (36,10) &$D_{6}\times D_6$ & 4 & 2 & By Lemma~\ref{lem.first}(e), $\ed (D_{6}\times D_6) \leqslant \ed(D_6)+\ed(D_6)=2$   \\ 
 \hline
  (36,11) &$C_{3}\times \operatorname{A}_4$ & 3 & 3 & Center is non-trivial and  Prop. \ref{Prop.non-trivial-center}  \\ 
 \hline
  (36,12) &$C_{6}\times D_6$ & 2 & 2 &  Theorem~\ref{ED=1}(b) \\ 
 \hline
  (36,13) & $C_2 \times((C_3 \times C_3) \rtimes C_2)$ & 4 & 3 & Proposition~\ref{prop.product}(b) with $A = C_2$ and $H =(C_3 \times C_3) \rtimes C_2$. Here the GAP Id of $H$ is (18,4),
  $Z(H)$ is trivial, and $\ed(H) = 2$. \\ 
 \hline
(38,1) & $D_{38}$ & 2 & 1 & Theorem~\ref{ED=1}(a) \\ 
 \hline
 (39,1) & $C_{13} \rtimes C_3$ & 3 & 2 &  Example~\ref{ex.(39,1)} \\
 \hline
 (40,1) & $C_5 \rtimes C_8 $ & 2 & 2 & Theorem~\ref{ED=1}(b)  \\ 
 \hline
 (40,3) & $C_5 \rtimes C_8$ & 4 & 3 & Proposition~\ref{prop.product}(a) with $A \simeq C_2$. Note that $G/A \simeq C_5 \rtimes C_4$ has GAP number (20, 3), trivial center, and essential dimension $2$. \\ 
 \hline
 (40,4) & $C_5 \rtimes Q_8$ & 2 & 2 & Theorem~\ref{ED=1}(b)  \\ 
 \hline
 (40,5) &$C_{4}\times D_{10}$ & 2 & 2 & Theorem~\ref{ED=1}(b)  \\ 
 \hline
 (40,6) & $D_{40}$ & 2 & 2 & Theorem~\ref{ED=1}(b) \\ 
 \hline
 (40,7) & $C_2 \times( C_5 \rtimes C_4) $ & 3 & 3 &  Center is non-trivial and  Prop. \ref{Prop.non-trivial-center} \\ 
 \hline
 (40,8) & $ (C_{10}\times C_2) \rtimes C_2 $ & 2 & 2 & Theorem~\ref{ED=1}(b)  \\ 
 \hline
 (40,10) &$C_{5}\times D_{8}$ & 2 & 2 & Theorem~\ref{ED=1}(b)  \\ 
 \hline
 (40,11) &$C_{5}\times Q_{8}$ & 2 & 2 & Theorem~\ref{ED=1}(b)  \\ 
 \hline
 (40,12) & $C_2 \times( C_5 \rtimes C_4) $ & 4 & 3 &$G=C_2 \times H,
 $ where $Z(H)$ is trivial and GAP Id of $H$ is (20,3). By 
 Prop.~\ref{prop.product}(b), $\ed(G)=\ed(C_2 \times H)=\ed(H)+1=3.$  \\ 
 \hline
 (40,13) & $C_{2}\times C_2 \times D_{10}$ & 3 & 3 &Center is non-trivial and  Prop. \ref{Prop.non-trivial-center} \\ 
 \hline
 (42,1) & $C_7 \rtimes C_6 $ & 6 & 2 & $\ed(G) \geqslant 2$ by Theorem~\ref{ED=1}(a), and $\ed(G) \leqslant 2$ by
 Theorem~\ref{Th.Semidirect-product} with $q = 7$. \\ 
 \hline
 (42,2) & $ C_2 \times (C_7 \rtimes C_3) $ & 3 & 3 & Center is non-trivial and  Prop.~\ref{Prop.non-trivial-center} \\ 
 \hline
 (42,3) &$C_{7}\times D_{6}$ & 2 & 2 & Theorem~\ref{ED=1}(b) \\ 
 \hline
 (42,4) &$C_{3}\times D_{14}$ & 2 & 2 &  Theorem~\ref{ED=1}(b) \\ 
 \hline
 (42,5) & $ D_{42}$ & 2 & 1 &  Theorem~\ref{ED=1}(a) \\ 
 \hline
 (44,1) & $C_{11} \rtimes C_4 $ & 2 & 2 & Theorem~\ref{ED=1}(b)  \\ 
 \hline
 (44,3) & $D_{44}$ &  2 & 2 & Theorem~\ref{ED=1}(b) \\ 
 \hline
 (46,1) & $D_{46}$ & 2 & 1 & Theorem~\ref{ED=1}(a) \\ 
 \hline
\end{tabular}
\end{center}

\section{Non-abelian groups of order 48}
\label{sect.48}

\begin{center}
\begin{tabular}{ |p{1.5cm}|p{4cm}|p{0.5cm}|p{0.5 cm}|p{7cm}| } 
 \hline
 GAP Id & Structure Description & Rd & Ed & Explanation\\ 
 \hline\hline
 (48,1) & $C_{3} \rtimes C_{16}$ & 2 & 2 & Theorem~\ref{ED=1}(b) \\
 \hline
 (48,3) & $(C_4 \times C_4) \rtimes C_3$ & 3 & 2 & $\rd(G) \geqslant 2$ because $G$ contains $C_4 \times C_4$. $\rd(G) \leqslant 2$ because $G$ is a subgroup of $T \rtimes \smallskip \mathcal{G}_3$ from Theorem~\ref{ED=2}{\rm(iv)}, with $G \cap T = C_4 \times C_4$      \\
 \hline
 (48,4) &$C_{8}\times D_{6}$ & 2 & 2 & Theorem~\ref{ED=1}(b)  \\ 
 \hline
 (48,5) & $C_{24} \rtimes C_2$ & 2 & 2 & Theorem~\ref{ED=1}(b)  \\ 
 \hline
 (48,6) & $C_{24} \rtimes C_2$ &  2 & 2 & Theorem~\ref{ED=1}(b) \\ 
 \hline
 (48,7) & $D_{48}$ & 2 & 2 & Theorem~\ref{ED=1}(b) \\ 
 \hline
 (48,8) & $ C_3 \rtimes Q_{16}$ & 2 & 2 & Theorem~\ref{ED=1}(b) \\ 
 \hline
 (48,9) & $ C_2 \times (C_3 \rtimes C_8)$ & 3 & 3 & Center is non-trivial and  Prop.~\ref{Prop.non-trivial-center}\\
 \hline
 (48,10) & $(C_3 \rtimes C_8)\rtimes C_2 $ & 2 & 2  & Theorem~\ref{ED=1}(b) \\
 \hline
 (48,11) & $C_4 \times (C_3 \rtimes C_4)$ & 3 & 3 &  Center is non-trivial and  Prop.~\ref{Prop.non-trivial-center}\\ 
 \hline
 (48,12) & $(C_3 \rtimes C_4)\rtimes C_4$ & 3 & 3 &  Center is non-trivial and  Prop.~\ref{Prop.non-trivial-center}\\ 
 \hline
 (48,13) & $C_{12} \rtimes C_4 $ & 3 & 3 & Center is non-trivial and  Prop.~\ref{Prop.non-trivial-center}\\ 
 \hline
 (48,14) & $(C_{12} \times C_2)\rtimes C_2 $ & 3 & 3 & Center is non-trivial and  Prop.~\ref{Prop.non-trivial-center}\\ 
 \hline
 (48,15) & $(C_3 \times D_{8})\rtimes C_2$ & 4 & 3 & $\ed(G)\geqslant 3$ by  Prop.~\ref{Prop.non-trivial-center}   and  $\ed(G) \leqslant 3$ by Prop.~\ref{prop.48}, where $G_2 \simeq {G}/{G_3}\simeq D_{16}.$\\ 
 \hline
 (48,16) & $(C_3 \rtimes Q_{8})\rtimes C_2$ & 4 & 3 & $\ed(G)\geqslant 3$ by  Prop.~\ref{Prop.non-trivial-center} and $\ed(G) \leqslant 3$ by Prop.~\ref{prop.48}, where $G_2 \simeq {G}/{G_3}\simeq QD_{16}.$\\
 \hline
 (48,17) & $(C_3 \times Q_{8})\rtimes C_2$ & 4 & 3 & $\ed(G)\geqslant 3$ by  Prop.~\ref{Prop.non-trivial-center} and  $\ed(G) \leqslant 3$ by Prop.~\ref{prop.48}, where $G_2 \simeq {G}/{G_3}\simeq QD_{16}.$\\
 \hline
 (48,18) & $C_3 \rtimes Q_{16}$ & 4 & 3 & $\ed(G)\geqslant 3$ by  Prop.~\ref{Prop.non-trivial-center}  and $\ed(G) \leqslant 3$ by Prop.~\ref{prop.48}, where $G_2 \simeq {G}/{G_3}\simeq Q_{16}.$  \\
 \hline
 (48,19) & $(C_6 \times C_2)\rtimes C_4$ & 3 & 3 &Center is non-trivial and  Prop. \ref{Prop.non-trivial-center}\\ 
 \hline
 (48,21) & $ C_3 \times ((C_4 \times C_2)\rtimes C_2)$  & 3 & 3 &Center is non-trivial and  Prop. \ref{Prop.non-trivial-center}\\ 
 \hline
 (48,22) & $C_3 \times (C_4\rtimes C_4)$ & 3 & 3 &Center is non-trivial and Prop. \ref{Prop.non-trivial-center}\\ 
 \hline
 (48,24) & $C_3 \times (C_8\rtimes C_2)$ & 2 & 2 & Theorem~\ref{ED=1}(b) \\ 
 \hline
 (48,25) &$C_{3}\times D_{16}$ & 2 & 2 & Theorem~\ref{ED=1}(b) \\ 
 \hline
 (48,26) & $C_{3}\times QD_{16}$& 2  & 2 & Theorem~\ref{ED=1}(b) \\ 
 \hline 
 (48,27) &$C_{3}\times Q_{16}$  & 2 & 2 & Theorem~\ref{ED=1}(b) \\ 
 \hline
 (48,28) & $C_2 \cdot \operatorname{S}_4 $ & 2 & 2 & Theorem~\ref{ED=1}(b) \\ 
 \hline
 (48,29) & $GL_2(\mathbb{F}_3)$ & 2 & 2 &  Theorem~\ref{ED=1}(b) \\ 
 \hline
 (48,30) & $\operatorname{A}_4 \rtimes C_4$ & 3 & 3  &Center is non-trivial and Prop. \ref{Prop.non-trivial-center}\\
 \hline
 \end{tabular}
\end{center}

\section{Non-abelian groups of order 48, continued}
\label{sect.48-continued}

\begin{center}
\begin{tabular}{ |p{1.5cm}|p{4cm}|p{0.5cm}|p{0.5cm}|p{7cm}| } 
 \hline
 GAP Id & Structure Description & Rd & Ed & Explanation\\ 
 \hline\hline

 (48,31) & $\operatorname{A}_4 \times C_4 $ & 3 & 3  &Center is non-trivial and Prop. \ref{Prop.non-trivial-center}\\
 \hline
 (48,32) &$C_{2}\times SL_2(\mathbb{F}_3)$ & 3 & 3 & Center is non-trivial and Prop. \ref{Prop.non-trivial-center}\\ 
 \hline
 (48,33) & $SL_2(\mathbb{F}_3)\rtimes C_2$ & 2 & 2 &  Theorem~\ref{ED=1}(b)  \\ 
 \hline
 (48,34) & $C_2 \times (C_3 \rtimes Q_8)$ & 3 & 3 & Center is non-trivial and Prop.~\ref{Prop.non-trivial-center} \\ 
 \hline
 (48,35) &$C_{2}\times C_{4}\times \operatorname{S}_{3}$ & 3 & 3 & Center is non-trivial and Prop. \ref{Prop.non-trivial-center}\\ 
 \hline
 (48,36) &$C_{2}\times D_{24}$ & 3 & 3 & Center is non-trivial and Prop. \ref{Prop.non-trivial-center}\\ 
 \hline
 (48,37) & $(C_{12} \times C_2)\rtimes C_2 $ & 2 & 2 &  Theorem~\ref{ED=1}(b)  \\ 
 \hline
 (48,38) &$D_{8}\times \operatorname{S}_{3}$ & 4 & 3 & $\ed(G)\geqslant 3$ by Prop. \ref{Prop.non-trivial-center} and $\ed(G)\leqslant \ed(D_8)+\ed(\operatorname{S}_3)=3$ 
 by Lemma~\ref{lem.first}(e). \\
 \hline
 (48,39) & $ (C_4 \times \operatorname{S}_3) \rtimes C_2 $ & 4 & 3 & $\ed(G)\geqslant 3$ by  Prop.~\ref{Prop.non-trivial-center} and  $\ed(G) \leqslant 3$ by Prop.~\ref{prop.48}, where GAP id of $G_2 \simeq {G}/{G_3}$ is (16,13).\\
 \hline
 (48,40) &$\operatorname{S}_{3}\times Q_{8}$  & 4 & 3 & % Center is non trivial.
 $\ed(G) \geqslant 3$ by Prop.~\ref{Prop.non-trivial-center} and $\ed(G)\leqslant \ed(Q_8)+\ed(\operatorname{S}_3)=3$ by Lemma~\ref{lem.first}(e). \\
 \hline
 (48,41) & $(C_4 \times \operatorname{S}_3) \rtimes C_2 $ & 4 & 3 & $\ed(G)\geqslant 3$ by  Prop.~\ref{Prop.non-trivial-center} and  $\ed(G) \leqslant 3$ by Prop.~\ref{prop.48}, where GAP id of $G_2 \simeq {G}/{G_3}$ is (16,13).\\ 
 \hline
 (48,42) & $C_2 \times C_2 \times (C_3 \rtimes C_4) $ & 4 & 4 & Proposition~\ref{prop.product}(b) with $A = C_2 \times C_2$
 and $H = C_3 \rtimes C_4$. Here $H$ has GAP Id (12, 1), $Z(H) \simeq C_2$, and $\ed(H) = 2$. 
\\  
 \hline
 (48,43) & $ C_2 \times ((C_6 \times C_2) \rtimes C_2) $ & 3 & 3 &Center is non-trivial and Prop.\ref{Prop.non-trivial-center}\\ 
 \hline
 (48,45) &$C_{6}\times D_{8}$ & 3 & 3 &  Center is non-trivial and Prop. \ref{Prop.non-trivial-center}\\ 
 \hline
 (48,46) &$C_{6}\times Q_{8}$ & 3 & 3 & Center is non-trivial and Prop. \ref{Prop.non-trivial-center}\\
 \hline
 (48,47) & $C_3 \times ((C_4 \times C_2) \rtimes C_2)$ & 2 & 2  &  Theorem~\ref{ED=1}(b)  \\
 \hline
 (48,48) &$C_{2}\times \operatorname{S}_{4}$ & 3 & 3 &Center is non-trivial and Prop. \ref{Prop.non-trivial-center}\\ 
 \hline
 (48,49) &$C_{2}\times C_2\times \operatorname{A}_{4}$ & 4 & 4 & 

 Prop.~\ref{prop.product}(b) with $A = C_2 \times C_2$.
 
 \\ 
 \hline
 (48,50) & $(C_2 \times C_2 \times C_2 \times C_2) \rtimes C_3 $ & 6 & 4 & $C_2 \times C_2 \times C_2 \times C_2$ is subgroup of $G$ and $G$ is subgroup of $\operatorname{A}_4 \times \operatorname{A}_4$; see Lemma~\ref{lem.(48,50)}. Using Lemma \ref{lem.first}(c) and (e), $4=\ed(C_2 \times C_2 \times C_2 \times C_2)\leqslant \ed(G)\leqslant \ed(\operatorname{A}_4 \times \operatorname{A}_4 ) = 4.$  \\ 
 \hline
 (48,51) & $C_{2}\times C_2\times C_2 \times \operatorname{S}_{3}$ & 4 & 4 & Prop.~\ref{prop.product}(b) with $A = C_2 \times C_2 \times C_2$.  \\ 
 \hline
\end{tabular}
\end{center}

 \section{Non-abelian groups of order 50 - 55}
 \label{sect.50-55}
 
\begin{center}
\begin{tabular}{ |p{1.5cm}|p{4cm}|p{0.5cm}|p{0.7 cm}|p{7cm}| } 
 \hline
 GAP Id &Structure Description &  Rd & Ed & Explanation\\ 
 \hline\hline
 (50,1) & $D_{50} $ & 2 & 1 & Theorem~\ref{ED=1}(a) \\ 
 \hline
 (50,3) & $C_5 \times D_{10}$ & 2 & 2 &  Theorem~\ref{ED=1}(b) . \\ 
 \hline
 (50,4) & $(C_5 \times C_5)\rtimes C_2$ & 4 & 2 &  Subgroup of $(C_5 \times C_5) \rtimes \mathcal{G}_1$ and Theorem~\ref{ED=2}(ii). \\ 
 \hline
 (52,1) & $C_{13}\rtimes C_4 $ &  2 & 2 &  Theorem~\ref{ED=1}(b) .  \\ 
 \hline
 (52,3) & $C_{13} \rtimes C_4$ & 4 & 2 & Subgroup of $(C_{13} \times C_{13}) \rtimes \mathcal{G}_2$ and Theorem~\ref{ED=2}(iii). \\
 \hline
 (52,4) & $D_{52}$ & 2 & 2 &  Theorem~\ref{ED=1}(b) . \\ 
 \hline
 (54,1) & $D_{54}$ & 2 & 1 & Theorem~\ref{ED=1}(a) \\ 
 \hline
 (54,3) & $C_3 \times D_{18}$ & 2 & 2 &   Theorem~\ref{ED=1}(b) . \\ 
 \hline
 (54,4) & $C_9 \times \operatorname{S}_3 $ & 2 & 2 &  Theorem~\ref{ED=1}(b) .\\ 
 \hline
 (54,5) & $(C_3 \times C_3)\rtimes C_6 $ & 6 & 3-5 & $\ed(G) \geqslant 3$ because $G$ contains the non-abelian group $G_3 = (C_3 \times C_3)\rtimes C_3$ of order $27$, and $\ed(G) \geqslant \ed(G_3) = \rd(G_3) = 3$. On the other hand, $\ed(G) \leqslant 5$ by Lemma~\ref{lem.first}(a) because the center of $G$ is trivial. \\
 \hline
  (54,6) & $C_9\rtimes C_6 $ & 6 & 3 &  $G$ contains the non-abelian group $G_3 = C_9 \rtimes C_3$ of order $27$; hence, $\ed(G) \geqslant \ed(G_3) = \rd(G_3) = 3$.
  On the other hand, $G$ contains the dihedral group $D_{18} = C_9 \rtimes C_2$ as a subgroup of index $3$. By Lemma~\ref{ED=1}(a),
  $\ed(D_{18}) = 1$. Moreover, $\ed(G) \leqslant \ed(D_{18}) \cdot [G:D_{18}] = 1 \cdot 3 = 3$
  by Lemma~\ref{lem.first}(d). \\
 \hline
 (54,7) & $(C_9 \times C_3) \rtimes C_2$ & 4 & 2 & $G$ is subgroup of $D_6 \times D_{18}$ and using Proposition~\ref{prop.product}(b), $\ed(D_6 \times D_{18})\leqslant \ed(D_6)+\ed(D_{18})=2.$ Now Use Lemma  \ref{lem.first}(e) and Theorem~\ref{ED=1}(b).\\ 
 \hline
 (54,8) & $((C_3\times C_3)\rtimes C_3)\rtimes C_2$ & 3 & 3 &  Center is non-trivial and Prop.~\ref{Prop.non-trivial-center}. \\ 
 \hline
 (54,10) & $C_2 \times((C_3\times C_3)\rtimes C_3)$ & 3 & 3 & Center is non-trivial and Prop.~\ref{Prop.non-trivial-center}. \\ 
 \hline
 (54,11) & $C_2 \times (C_9 \rtimes C_3) $ & 3 & 3 &Center is non-trivial and Prop.~\ref{Prop.non-trivial-center}. \\ 
 \hline
 (54,12) & $C_{3}\times C_3\times \operatorname{S}_{3}$ & 3 & 3 &Center is non-trivial and Prop.~\ref{Prop.non-trivial-center}. \\ 
 \hline
 (54,13) & $C_3 \times ((C_3 \times C_3) \rtimes C_2) $ & 4 & 3 &$G=C_3 \times H,
 $ where $Z(H)$ is trivial and GAP Id of $H$ is (18,4). Using 
 Prop.~\ref{prop.product}(b), $\ed(G)=\ed(C_3 \times H)=\ed(H)+1=3.$\\ 
 \hline
 (54,14) & $ (C_3 \times C_3 \times C_3) \rtimes C_2 $ & 6 & 3 & $C_3 \times C_3 \times C_3$ is subgroup of $G$ and $G$ is subgroup of $\operatorname{S}_3 \times \operatorname{S}_3 \times \operatorname{S}_3.$ Using Lemma \ref{lem.first}(c), $3=\ed(C_3 \times C_3 \times C_3)\leqslant \ed(G)\leqslant \ed(\operatorname{S}_3 \times \operatorname{S}_3 \times \operatorname{S}_3) = 3.$  \\ 
 \hline
 (55,1) & $C_{11} \rtimes C_5 $ & 5 & 3-4 &  $\ed(G) \geqslant 3$ by Theorem~\ref{ED=2}. The inequality $\ed(G) \leqslant 4$ follows from Lemma~\ref{lem.first}(a) because the center of $G$ is trivial. \\
 \hline
 
 \end{tabular}
\end{center}

 \section{Non-abelian groups of order 56 -  63}
 \label{sect.56-63}
 
\begin{center}
\begin{tabular}{ |p{1.5cm}|p{4cm}|p{0.5cm}|p{0.5 cm}|p{7cm}| } 
 \hline
 GAP Id &Structure Description &  Rd & Ed & Explanation\\ 
 \hline\hline
 (56,1) & $C_7 \rtimes C_8 $ & 2 & 2 &   Theorem~\ref{ED=1}(b) \\ 
 \hline
 (56,3) & $ C_7 \rtimes Q_8 $ & 2 & 2 &  Theorem~\ref{ED=1}(b)  \\ 
 \hline
 (56,4) & $C_4 \times D_{14}$ & 2 & 2 &  Theorem~\ref{ED=1}(b) \\ 
 \hline
 (56,5) & $D_{56}$ & 2 & 2 &  Theorem~\ref{ED=1}(b)  \\ 
 \hline
 (56,6) & $C_2 \times (C_7 \rtimes C_4) $  & 3 & 3 & Center is non-trivial and Prop.~\ref{Prop.non-trivial-center}\\ 
 \hline
 (56,7) & $(C_{14} \times C_2) \rtimes C_2 $ & 2 & 2 &   Theorem~\ref{ED=1}(b) \\ 
 \hline
 (56,9) & $C_{7}\times D_{8}$& 2 & 2 &  Theorem~\ref{ED=1}(b) . \\ 
 \hline
 (56,10) & $C_{7}\times Q_{8}$& 2 & 2 &  Theorem~\ref{ED=1}(b) \\ 
 \hline
 (56,11) & $(C_2 \times C_2\times C_2 )\rtimes C_7 $ & 7 & 3-6 & $\ed(G) \geqslant 3$ because $G$ contains $C_2\times C_2 \times C_2$. On the other hand, $\ed(G) \leqslant 6$ by Lemma~\ref{lem.first}(a) because the center of $G$ is trivial. \\ 
 \hline
 (56,12) & $C_{2}\times C_2 \times D_{14}$& 3 & 3 & Center is non-trivial and Prop. \ref{Prop.non-trivial-center}\\ 
 \hline
  (57,1) & $C_{19}\rtimes C_3 $ & 3 & 2 &Subgroup of $(C_{19} \times C_{19}) \rtimes \mathcal{G}_4$ and Theorem~\ref{ED=2}(v)\\ 
 \hline
 (58,1) & $D_{58}$ & 2 & 1 & Theorem~\ref{ED=1}(a)  \\ 
 \hline
 (60,1) & $C_5 \times (C_3 \rtimes C_4) $ & 2 & 2 &  Theorem~\ref{ED=1}(b)  \\ 
 \hline
 (60,2) & $ C_3 \times (C_5 \rtimes C_4)$ &  2 & 2 &   Theorem~\ref{ED=1}(b) \\ 
 \hline
 (60,3) & $C_{15}\rtimes C_4 $ & 2 & 2 &   Theorem~\ref{ED=1}(b) \\ 
 \hline
 (60,5) & $\operatorname{A}_5$ & 3 & 2 & Subgroup of $\operatorname{S}_5$ and Theorem~\ref{ED=2}(vii)\\ 
 \hline
 (60,6) & $ C_3 \times (C_5 \rtimes C_4)$ & 4 & 3 & Proposition~\ref{prop.product}(b) with $A=C_3$
 and $H= (C_5 \rtimes C_4)$. Here $Z(H)$ is trivial, the GAP Id of $H$ is (20,3), and $\ed(H) = 2$.\\ 
 \hline
 (60,7) & $ C_{15}\rtimes C_4$ & 4 & 3  &  $\ed(G) \leqslant 3$ by Lemma~\ref{lem.first}(a). $\ed(G) \geqslant 3$ by
 Example~\ref{ex.(60,7)}. \\ 
 \hline
 (60,8) & $\operatorname{S}_{3}\times D_{10}$& 4 & 2 & Lemma~\ref{lem.first}(e) and  Theorem~\ref{ED=1}(b)  \\ 
 \hline
 (60,9) &$C_{5}\times \operatorname{A}_{4}$ & 3 & 3 &  Center is non-trivial and Prop.  \ref{Prop.non-trivial-center} \\ 
 \hline
 (60,10) &$C_{6}\times D_{10}$ & 2 & 2 &   Theorem~\ref{ED=1}(b)  \\ 
 \hline
 (60,11) &$S_{3}\times C_{10}$ & 2 & 2 &   Theorem~\ref{ED=1}(b)  \\ 
 \hline
 (60,12) & $D_{60}$ & 2 & 2 &   Theorem~\ref{ED=1}(b) \\ 
 \hline
 (62,1) & $D_{62}$ & 2 & 1 &  Theorem~\ref{ED=1}(a) \\ 
 \hline
 (63,1) & $C_7 \rtimes C_9 $ & 3 & 3 & Center is non-trivial and Prop.  \ref{Prop.non-trivial-center}\\ 
 \hline
 (63,3) & $C_3 \times (C_7 \rtimes C_3) $ & 3 & 3 & Center is non-trivial and Prop.  \ref{Prop.non-trivial-center}\\ 
 \hline
\end{tabular}
\end{center}

\section{Preliminaries on essential dimension}
\label{sect.prel}

\begin{lemma} \label{lem.first}
{\rm(a)} $\ed(G) \leqslant \rd(G)$. Moreover, if the center of $G$ is trivial, then $\ed(G) \leqslant \rd(G) - 1$.

\smallskip
{\rm (b)} If $G$ is a finite abelian group, then $\ed(G) = \rd(G) = \operatorname{rank}(G)$. Here $\operatorname{rank}(G)$ denotes the rank of the finite abelian group $G$, i.e., the minimal number of generators of $G$.

\smallskip
{\rm (c)} Let $H$ be a subgroup of group $G$. Then $\ed(H) \leqslant \ed(G)$.

\smallskip
{\rm (d)}  Let $H$ be a subgroup of group $G$. Then $\ed(G) \leqslant [G: H] \cdot \ed(H)$.

\smallskip
{\rm (e)} $\ed(G_1 \times G_2) \leqslant \ed(G_1) +\ed(G_2)$.
\end{lemma}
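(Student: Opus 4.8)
For parts (a), (c), and (e) I would argue directly from the definition, in each case exhibiting an explicit faithful linearizable variety of the desired dimension. For (a), a faithful representation $V$ with $\dim V = \rd(G)$ is itself a faithful linearizable $G$-variety (via the identity $V \to V$), giving $\ed(G) \leqslant \rd(G)$; when $Z(G)$ is trivial I would instead use the projectivization $\mathbb{P}(V)$. The projection $V \dasharrow \mathbb{P}(V)$ is a $G$-equivariant dominant rational map, so $\mathbb{P}(V)$ is linearizable of dimension $\rd(G)-1$, and the one thing to check is faithfulness of the $G$-action on $\mathbb{P}(V)$: an element acting trivially there acts by a scalar on $V$, hence (by faithfulness of $V$) commutes with all of $G$ and lies in $Z(G)=1$. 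For (c), the variety $X$ realizing $\ed(G)$ is, after restricting the action, a faithful $H$-variety, and restricting the given $G$-linearization $V \dasharrow X$ to $H$ shows it is $H$-linearizable; thus $\ed(H) \leqslant \dim X = \ed(G)$. For (e), taking optimal faithful linearizable $G_i$-varieties $X_i$ (linearized by $V_i \dasharrow X_i$), the product $X_1 \times X_2$ is a faithful $(G_1 \times G_2)$-variety of dimension $\ed(G_1)+\ed(G_2)$, linearized by $V_1 \oplus V_2 \dasharrow X_1 \times X_2$ with the componentwise action.

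For (b), the bound $\ed(G) \leqslant \rd(G)$ from (a) together with $\rd(G) = \rank(G)$ gives the upper estimate. The equality $\rd(G) = \rank(G)$ for abelian $G$ follows by decomposing a faithful representation into one-dimensional characters: faithfulness is equivalent to these characters generating the dual group $\widehat{G} \cong G$, which needs at least $\rank(G)$ of them, and exactly $\rank(G)$ suffice. For the matching lower bound $\ed(G) \geqslant \rank(G)$ I would pass to a Sylow subgroup: choosing a prime $p$ dividing the smallest invariant factor of $G$, the Sylow $p$-subgroup $G_p$ is an abelian $p$-group with $\rank(G_p) = \rank(G)$, and by (c) together with the Karpenko--Merkurjev Theorem~\ref{p-groups} we obtain $\ed(G) \geqslant \ed(G_p) = \rd(G_p) = \rank(G_p) = \rank(G)$.

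Part (d) is the substantive one and the main obstacle. The idea is to replace the optimal faithful linearizable $H$-variety $Y$ (with $\dim Y = \ed(H)$ and $H$-linearization $W \dasharrow Y$) by the coinduced $G$-variety $\widetilde{Y} = \mathrm{Map}_H(G,Y)$ of $H$-equivariant maps $G \to Y$, on which $G$ acts by left translation. Choosing coset representatives identifies $\widetilde{Y}$ with $Y^{[G:H]}$, so $\dim \widetilde{Y} = [G:H]\cdot \ed(H)$. Two points need verification. First, linearizability: the coinduced representation $\widetilde{W} = \mathrm{Map}_H(G,W) \cong \operatorname{Ind}_H^G W$ is a linear $G$-representation, and applying $W \dasharrow Y$ pointwise yields a $G$-equivariant dominant rational map $\widetilde{W} \dasharrow \widetilde{Y}$. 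Second, and this is the crux, faithfulness of the $G$-action: an element $g$ outside the normal core $\bigcap_{x\in G} xHx^{-1}$ permutes the coordinates of $Y^{[G:H]}$ nontrivially and so cannot act trivially, while an element $g \neq 1$ inside the core acts on the coordinate indexed by the trivial coset through the $H$-action of $g$ on $Y$, which is nontrivial because $Y$ is $H$-faithful. Hence $\widetilde{Y}$ is a faithful linearizable $G$-variety, and $\ed(G) \leqslant \dim \widetilde{Y} = [G:H]\cdot \ed(H)$. The faithfulness analysis of the coinduced action, separating the ``permutation'' and ``core'' contributions, is the step I expect to require the most care.
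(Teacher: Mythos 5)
Your proofs of (a), (c), and (e) follow the same route as the paper's: the optimal representation $V$ itself, and $\mathbb{P}(V)$ when the center is trivial (where the paper cites Berhuy--Favi for faithfulness of the projective action, you supply the scalar-matrix argument directly, which is fine); restriction of the optimal linearized $G$-variety to $H$; and products of optimal varieties. For (d), your coinduced variety $\mathrm{Map}_H(G,Y)$ is, for finite index, exactly the paper's induced variety $X{\uparrow}_H^G$ in functorial clothing: after choosing coset representatives both become $Y^{[G:H]}$ with the same twisted permutation action. The one substantive difference there is that you actually verify faithfulness (splitting into the non-core and core cases), a point the paper dismisses as ``clear from this definition''; your verification is correct, with the tacit hypothesis---needed equally by the paper---that $H$ is nontrivial, so that $Y$ has more than one point (harmless, since $\ed(H)=0$ forces $H$ trivial and the inequality is vacuous otherwise). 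The genuine divergence is (b): the paper simply cites Buhler--Reichstein (Theorem 6.1 of that paper), whereas you prove the statement: the upper bound via the character-theoretic identity $\rd(G)=\operatorname{rank}(G)$ for abelian $G$, and the lower bound by passing to a Sylow $p$-subgroup for $p$ dividing the smallest invariant factor (so the rank is preserved) and invoking the Karpenko--Merkurjev Theorem~\ref{p-groups} together with your part (c). This derivation is correct and involves no circularity, since Theorem~\ref{p-groups} is quoted independently in the paper; what it buys is self-containedness relative to the paper's toolkit, at the cost of resting the classical and relatively elementary abelian case on the much deeper Karpenko--Merkurjev theorem.
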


\begin{proof} (a) Let $V$ be a faithful linear representation of $G$ of minimal dimension, $\dim(V) = \rd(G)$. Then $V$ itself is linearizable; hence, $\ed(G) \leqslant \dim(V) = \rd(G)$. Furthemore, if $G$ has trivial center, then the $G$-action on
the projective space $\mathbb P(V)$ is also faithful. Hence, $\mathbb P(V)$ is also linearizable and 
\[ \ed(G) \leqslant \dim \, \mathbb P(V) = \dim(V)  -1 = \rd(G) -1 \, ; \]
cf.~\cite[Corollary 6.18]{berhuy-favi}.

\smallskip
(b) is \cite[Theorem 6.1]{B-R}.

\smallskip
(c) Suppose $G$ acts faithfully on a variety $X$, and $X$ is linearizable as a $G$-variety of minimal dimension, $\dim(X) = \ed(G)$. Denote the linearization by $\pi \colon V \dasharrow X$, where $V$ is a faithful linear representation of $G$.
Restricting the $G$-action on $V$ and $X$ to the subgroup $H$, we may view $\pi$ as a linearization of the $H$-action on $X$. Thus $\ed(H) \leqslant \dim(X) = \ed(G)$, as claimed.

\smallskip
(d) If $V$ is a vector space with a linear action of $H$, and $V{\uparrow}_H^G$ is the induced representation of $G$. 
If $X$ is a $H$-variety, we can define the induced $G$-variety by $X{\uparrow}_H^G$ in the same way as $V{\uparrow}_H^G$, i.e., as follows.

The underlying variety of $X{\uparrow}_H^G$ is the direct product of $n = [G:H]$ copies of $X$, labeled by the left cosets of $H$ in $G$. Choosing representatives $g_1, \ldots, g_n$ for these cosets, 
we can write elements of $X{\uparrow}_H^G$ as $(x_1, \ldots, x_n)$. For any $g \in G$, write $g^{-1} g_i$ as
$g_j h_i$, for a suitable $j(i)  = 1, \ldots, n$ and $h_i \in H$. Then 
\[ g \cdot (x_1, \ldots, x_n) = (x_1', \ldots, x_n'), \]
where $x_i' = h_i^{-1}(x_{j(i)})$. The $G$-variety $X{\uparrow}_H^G$ produced by this construction
is independent of the choice of representatives $g_1, \ldots, g_n$ for the cosets of $H$ in $G$, up to (a $G$-equivariant) isomorphism.

The following two properties of
$X{\uparrow}_H^G$ are clear from this definition:

\smallskip
(i) If the $H$-action on $X$ is faithful, then the
$G$-action on $X{\uparrow}_H^G$ is faithful,

\smallskip
(ii) An $H$-equivariant morphism (respectively, rational map) $f \colon X \to Y$ induces a $G$-equivariant morphism (respectively, rational map) $f{\uparrow}_H^G \colon X{\uparrow}_H^G \to Y{\uparrow}_H^G$ given by
$$(x_1, \ldots, x_n) \mapsto (f(x_1), \ldots, f(x_n)).$$
To finish the proof, suppose $\ed(H) = d$.
Then there exists a faithful linearizable $H$-variety $X$ of dimension $d$. That is, there exists an $H$-equivariant dominant rational $V \dasharrow X$, where
$V$ is a linear representation of $H$. Then by property
(ii), there exists a dominant rational $G$-equivariant map $V{\uparrow}_H^G \mapsto X{\uparrow}_H^G$. Here
$V{\uparrow}_H^G$ is a linear representation of $G$ and
the $G$-action on $X{\uparrow}_H^G$
is faithful by (i). We conclude that 
\[ \ed(G) \leqslant \dim \, X{\uparrow}_H^G = n \cdot \dim(X) = [G:H] \cdot d = [G:H] \cdot \ed(H), \]
as claimed.

\smallskip
(e) If $X$ is a linearizable $G$-variety and $Y$ is a linearizble $H$-variety, then
$X \times Y$ is a linearizable $(G \times H)$-variety. Thus $\ed(G \times H) \leqslant \dim(X \times Y) = \dim(X) + \dim(Y)$.
Taking $X$ and $Y$ to be of minimal possible dimensions, $\ed(G)$ and $\ed(H)$, respectively, we see that
$\ed(G \times H) \leqslant \ed(G) + \ed(H)$.
\end{proof}

\begin{proposition} \label{prop.product} {\rm (R. L\"otscher)}

(a) Let $A$ be a central subgroup of a finite group $G$ such that $A \cap [G, G] = 1$. Then
\[ \ed(G) = \ed(G/A) - \rank(Z(G/A)) + \rank(Z(G)). \]

(b) In particular, 
$\ed(A \times H) = \ed(H) + \rank(Z(H) \times A) - \rank(Z(H))$
for any finite abelian group $A$ and any finite group $H$.
\end{proposition}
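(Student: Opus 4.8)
The plan is to derive part (b) from part (a) and then prove part (a) by first identifying the center of the quotient and reducing to the case $A \cong C_p$. Part (b) is the instance $G = A \times H$ of part (a): here $A$ is a central subgroup, and since $A$ is abelian every commutator of $G$ lies in $1 \times [H,H]$, so $[G,G] = 1 \times [H,H]$ and $A \cap [G,G] = 1$, as required. Moreover $G/A \cong H$ and $Z(G) = A \times Z(H)$. Substituting $\ed(G/A) = \ed(H)$, $\rank Z(G/A) = \rank Z(H)$ and $\rank Z(G) = \rank(A \times Z(H)) = \rank(Z(H) \times A)$ into the formula of part (a) yields exactly the assertion of (b).

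For part (a), the first step I would carry out is the structural identity $Z(G/A) = Z(G)/A$. Indeed, the preimage in $G$ of $Z(G/A)$ is $\{g \in G : [g,G] \subseteq A\}$, and for such a $g$ one has $[g,G] \subseteq A \cap [G,G] = 1$, so $g \in Z(G)$; thus no element becomes central only modulo $A$. Consequently the claimed formula is equivalent to the clean statement that $\ed(G) - \rank Z(G) = \ed(G/A) - \rank Z(G/A)$, i.e. the invariant $\ed(\cdot) - \rank Z(\cdot)$ is unchanged on passing to $G/A$, and the correction term $\rank Z(G) - \rank(Z(G)/A)$ lies between $0$ and $\rank(A)$. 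I would then reduce to $A \cong C_p$: choosing $A' \subseteq A$ of prime index $p$, one checks $A' \cap [G,G] = 1$ and $(A/A') \cap [G/A',\,G/A'] = 1$, so the formula for the pairs $(G,A')$ and $(G/A',\,A/A')$ compose (the term $\rank Z(G/A')$ cancels) to give the formula for $(G,A)$; induction on $|A|$ then reduces everything to quotients by a central $C_p$.

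So assume $A \cong C_p$, write $\bar G = G/A$, and set $\delta = \rank Z(G) - \rank(Z(G)/A) \in \{0,1\}$; the goal is $\ed(G) = \ed(\bar G) + \delta$. The essential input is that $A \cap [G,G] = 1$ forces $A \hookrightarrow G^{\mathrm{ab}}$, so a faithful character of $A \cong C_p$ extends to a linear character $\chi \colon G \to \mathbb C^*$. For the upper bound I start from a faithful linearizable $\bar G$-variety $Y$ with $\dim Y = \ed(\bar G)$, pulled back along $G \to \bar G$, and adjoin the line $L_\chi$ on which $G$ acts through $\chi$; since $\ker(\chi) \cap A = 1$ the resulting $G$-action is faithful, and $L_\chi$ is linearizable, giving $\ed(G) \leqslant \ed(\bar G) + 1$. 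This is sharp when $\delta = 1$. When $\delta = 0$ (i.e. $A \subseteq p\,Z(G)$), one must instead show the extra coordinate is redundant: the faithful character of $A$ can be taken to factor through the central torus already acting on an optimal model of $Y$, so it is incorporated without raising the dimension. For the matching lower bound $\ed(G) \geqslant \ed(\bar G) + \delta$ I would invoke the Karpenko–Merkurjev type bound detecting the central subgroup $A$ together with a descent from a faithful linearizable $G$-variety $X$ to the $\bar G$-variety $X/A$, the new central generator accounting for the summand $\delta$.

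The main obstacle is precisely this bookkeeping: upgrading the naive bounds to the exact correction term $\delta$. In the upper bound one must prove the absorption in the $\delta = 0$ case rigorously, and in the lower bound one must ensure that the quotient $X/A$ carries a \emph{faithful} $\bar G$-action of the same dimension, which requires controlling generic stabilizers (a faithful linear representation of a finite group is generically free because $\bigcup_{g \neq 1} V^g$ is a proper subvariety, and this must be propagated to $X$). The structural identity $Z(G/A) = Z(G)/A$ is what makes $\delta$ both computable and nonnegative, and pinning down that both the geometric construction of Lemma~\ref{lem.first}-type and the cohomological lower bound are governed by this single quantity is the crux where the hypothesis $A \cap [G,G] = 1$ is used in full strength.
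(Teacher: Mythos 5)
The first thing to note is that the paper does not actually prove this proposition: its ``proof'' is a citation of Theorem~9 and Corollary~10 of L\"otscher's paper on multihomogeneous covariants. So your attempt has to be judged against L\"otscher's argument, and by that standard your formal scaffolding is fine but the two load-bearing steps are missing. What you do correctly: deriving (b) from (a), the identity $Z(G/A)=Z(G)/A$ (a genuine use of $A\cap[G,G]=1$), the d\'evissage by induction to $A\simeq C_p$, the extension of a faithful character of $A$ to $\chi\colon G\to\mathbb C^*$ (possible exactly because $A$ embeds into $G^{\mathrm{ab}}$), and the resulting upper bound $\ed(G)\leqslant\ed(G/A)+1$. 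All of that is sound and is roughly how one would organize a real proof.

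The gaps are these. First, when $\delta=0$ you need $\ed(G)\leqslant\ed(\bar G)$, and the sentence about the faithful character of $A$ ``factoring through the central torus already acting on an optimal model of $Y$'' is not an argument: a minimal faithful linearizable $\bar G$-variety $Y$ carries no torus action in general, and nothing you wrote explains how to let $A$ act faithfully without adding a coordinate. Second, and more seriously, the lower bound $\ed(G)\geqslant\ed(\bar G)+\delta$ cannot be obtained by ``descent from $X$ to $X/A$'': even the weaker inequality $\ed(G)\geqslant\ed(G/A)$ is \emph{false} for general central subgroups $A$. Concretely, take $G$ to be the central product $D_8 * C_4$ (GAP Id $(16,13)$, with $\ed(G)=2$ by the paper's table) and $A=[G,G]\simeq C_2$; then $G/A\simeq C_2\times C_2\times C_2$ has essential dimension $3>2$. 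Of course this $A$ violates the hypothesis $A\cap[G,G]=1$, but the example shows that if $V\dashrightarrow X$ linearizes $X$, the quotient $V/A\dashrightarrow X/A$ does not linearize $X/A$ (since $V/A$ is not a representation of $G/A$), so the hypothesis must enter the lower bound in an essential, non-formal way. This is precisely what L\"otscher's machinery of multihomogeneous covariants accomplishes: one shows every faithful linearizable $G$-variety compresses to a multihomogeneous one whose weight data separate the $\rank Z(G)$ ``toral'' directions, which yields both the missing upper bound in the $\delta=0$ case and the exact lower bound. Appealing to ``a Karpenko--Merkurjev type bound'' does not substitute for this: the Karpenko--Merkurjev theorem concerns $p$-groups, and $G$ here is an arbitrary finite group. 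So the proposal is a reasonable skeleton, but the two steps it leaves open are exactly the content of the theorem being cited.
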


\begin{proof} Part (a) is Theorem 9 in~\cite{lotscher2010multihomogeneous}. (b) is a special case of (a); 
see~\cite[Corollary10]{lotscher2010multihomogeneous}. 
\end{proof} 

\begin{theorem} {\rm(\cite[Theorem 4.1]{km2}} \label{p-groups} Let $G$ be a finite $p$-group. Then $\ed(G) = \rd(G)$.
\end{theorem}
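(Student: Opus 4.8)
Since Lemma~\ref{lem.first}(a) already gives $\ed(G) \leqslant \rd(G)$, the whole content is the reverse inequality $\ed(G) \geqslant \rd(G)$. The plan is to establish the sharper bound $\ed_p(G) \geqslant \rd(G)$ for the essential $p$-dimension; as $\ed(G) \geqslant \ed_p(G)$ this is enough, and it in fact yields $\ed(G) = \ed_p(G) = \rd(G)$ simultaneously. Throughout one works over a field containing a primitive $p$-th root of unity, so that the relevant central subgroup of $G$ may be identified with a power of $\mu_p$.

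The first step is a purely representation-theoretic description of $\rd(G)$. In a finite $p$-group every nontrivial normal subgroup meets the center, hence contains a central element of order $p$; applying this to $\ker(\rho)$ shows that a representation $\rho$ is faithful if and only if its restriction to $C := Z(G)[p] \cong \mu_p^{\,r}$ is faithful. Writing $\rho = \bigoplus_i \rho_i$ as a sum of irreducibles with central characters $\chi_i \in \widehat{C}$, faithfulness amounts to the $\chi_i$ generating $\widehat{C}$. Since any generating set of the $\mathbb{F}_p$-vector space $\widehat{C} \cong (\mathbb{Z}/p)^r$ contains a basis, and discarding the superfluous characters only lowers the total dimension, the minimum is attained by exactly $r$ irreducibles whose central characters form a basis of $\widehat{C}$. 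Hence
\[ \rd(G) = \min \sum_{i=1}^{r} \dim \rho_i , \]
the minimum taken over bases $\chi_1, \dots, \chi_r$ of $\widehat C$ and irreducibles $\rho_i$ of smallest dimension with central character $\chi_i$.

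The heart of the matter is the lower bound $\ed_p(G) \geqslant \rd(G)$, which I would obtain by passing to a versal $G$-torsor over a field $K$ and pushing it forward along the central inclusion $C = \mu_p^{\,r} \hookrightarrow G$ to a $C$-gerbe $\mathcal X$. A basis $\chi_1, \dots, \chi_r$ of $\widehat C$ produces $r$ independent classes $\beta_1, \dots, \beta_r \in H^2(K, \mu_p) = \mathrm{Br}(K)[p]$, and each chosen irreducible $\rho_i$ twists into a central simple $K$-algebra $A_i$ of degree $\dim \rho_i$ whose class is $\beta_i$. One shows $\ed_p(G) \geqslant \ed_p(\mathcal X) \geqslant \operatorname{cdim}\bigl(\prod_{i=1}^r \operatorname{SB}(A_i)\bigr) + r$, the extra $r$ reflecting the $\mu_p^{\,r}$-band of the gerbe (exactly as in the abelian case, where $\ed_p(\mu_p^{\,r}) = r$). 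The decisive input is Karpenko's incompressibility theorem: a product of Severi--Brauer varieties of $p$-primary division algebras with independent Brauer classes is incompressible, so $\operatorname{cdim}\bigl(\prod_i \operatorname{SB}(A_i)\bigr) = \sum_i (\deg A_i - 1)$. Adding $r$ returns $\sum_i \deg A_i = \sum_i \dim \rho_i$, which by the previous step equals $\rd(G)$.

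The single genuinely hard ingredient is Karpenko's incompressibility theorem for products of Severi--Brauer varieties: it rests on a delicate analysis of the Chow groups of these varieties modulo $p$, via Steenrod operations and the algebra of correspondences. Everything else --- the reduction to $\ed_p$, the characterization of faithfulness through $Z(G)[p]$, and the translation of representations into central simple algebras over the versal gerbe --- is formal by comparison. Since the statement is quoted as~\cite[Theorem 4.1]{km2}, I would in the end refer to that paper for the precise form of this incompressibility estimate and the verification of the $+\,r$ gerbe correction.
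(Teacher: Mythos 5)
The paper does not actually prove this statement: it is quoted directly from Karpenko and Merkurjev \cite[Theorem 4.1]{km2}, so the paper's entire ``proof'' is the citation. Your proposal is a faithful outline of the Karpenko--Merkurjev argument itself --- the reduction of faithfulness to the $p$-torsion subgroup $C = Z(G)[p]$ of the center, the resulting description of $\rd(G)$ as a minimum of $\sum_i \dim \rho_i$ over bases of $\widehat{C}$, the passage to a versal torsor and its associated $C$-gerbe, and the lower bound on $\ed_p$ via canonical $p$-dimension of products of Severi--Brauer varieties plus Karpenko's incompressibility theorem --- and, like the paper, you ultimately defer the one genuinely hard ingredient to the same reference, so in substance the two ``proofs'' rest on identical ground. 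One caution about your phrasing of that ingredient: incompressibility of $\operatorname{SB}(A_1) \times \cdots \times \operatorname{SB}(A_r)$ does \emph{not} follow from the classes $[A_i]$ being merely independent in $\operatorname{Br}(K)$; the correct hypothesis is that $\sum_i \deg(A_i)$ is minimal among all bases of the subgroup of $\operatorname{Br}(K)$ that these classes generate (with independence alone one can have, say, $A_1 \otimes A_2$ of small index, and then the product of Severi--Brauer varieties is compressible). This minimality condition is precisely mirrored by the minimum over bases in your formula for $\rd(G)$, and since your argument applies incompressibility to the basis realizing that minimum, the structure of your sketch accommodates the correct hypothesis even though your stated version of the theorem, taken literally, is false.
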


\begin{theorem}\label{Th.Semidirect-product}
   Let $q = p^n$ be a prime power. Then 
   $$\displaystyle \ed\big(  C_q \rtimes C_q^* \big)
\leqslant \phi(p-1)p^{n-1},$$ where $\phi$ is the Euler $\phi$-function. 
\end{theorem}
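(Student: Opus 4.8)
The plan is to read $C_q\rtimes C_q^*$ as the affine group $G=K\rtimes H$ of the ring $R:=\mathbb Z/p^n\mathbb Z$, where $K=(R,+)\cong C_{p^n}$ and $H=R^\times$ acts by multiplication; recall that for $p$ odd $H$ is cyclic of order $\phi(p^n)=p^{n-1}(p-1)$. I would first produce an explicit faithful representation in monomial form. The character $\chi$ of $K$ with $\chi(1)=e^{2\pi i/p^n}$ is faithful and has trivial $H$-stabilizer, so $W:=\operatorname{Ind}_K^G\chi$ is faithful of dimension $[G:K]=|H|=p^{n-1}(p-1)$ (this is also the value $\rd(G)$ recorded in the tables). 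In the weight basis $\{e_b\}_{b\in R^\times}$ the subgroup $K$ acts diagonally, $y\cdot e_b=\zeta^{by}e_b$ with $\zeta=e^{2\pi i/p^n}$, while $H$ permutes the $e_b$ by its regular action; thus $G$ acts by monomial matrices.

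Next I would compress $W$ by a toric argument. The open subset $T=(\mathbb C^\times)^{R^\times}\subset W$ is $G$-invariant and dense, with character lattice $M=X^\ast(T)=\mathbb Z[R^\times]=\mathbb Z[H]$; here $K$ acts trivially on $M$ (it acts on $T$ by the translations $s(y)=(\zeta^{by})_b$) and $H$ acts on $M$ by the regular representation. For any $H$-sublattice $M'\subseteq M$ the quotient torus $T':=\operatorname{Spec}\mathbb C[M']$ receives a dominant $G$-equivariant map $W\supset T\twoheadrightarrow T'$, so $\ed(G)\le\dim T'=\rank(M')$ \emph{provided} $G$ acts faithfully on $T'$. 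Writing $S\colon M\to R$, $\sum_b a_b e_b\mapsto\sum_b a_b b$ for the ($H$-equivariant) ``sum map'', a short check shows faithfulness of $T'$ is equivalent to two conditions: (i) $H$ acts faithfully on $M'$, and (ii) $S(M')$ contains a unit of $R$, equivalently $\overline S(M')\neq 0$ where $\overline S\colon M\to R/pR=\mathbb F_p$ sends $e_b\mapsto(b\bmod p)=\omega(b)$ and $\omega\colon H\twoheadrightarrow\mathbb F_p^\times$ is reduction mod $p$. The problem thus reduces to minimizing $\rank(M')$ subject to (i) and (ii).

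I would then construct a suitable $M'$. Fix a generator $t$ of $H$ and identify $\mathbb Z[H]=\mathbb Z[t]/(t^{|H|}-1)$; under $\overline S$ one has $\overline S(f(t))=\overline f(\alpha)$, where $\alpha:=\omega(t)\in\mathbb F_p^\times$ is a primitive root mod $p$. Set $D=\{p^i(p-1):0\le i\le n-1\}$ and take
\[ M'=\Big(\textstyle\prod_{d\mid |H|,\ d\notin D}\Phi_d(t)\Big)\cdot\mathbb Z[H], \]
whose rational span is $\bigoplus_{d\in D}V_d$ (the sum of the rational irreducibles $V_d\cong\mathbb Q(\zeta_d)$), so that $\rank(M')=\sum_{d\in D}\phi(d)=\phi(p-1)\sum_{i=0}^{n-1}\phi(p^i)=\phi(p-1)\,p^{n-1}$. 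Condition (i) holds because $\operatorname{lcm}(D)=p^{n-1}(p-1)=|H|$, so no nontrivial element of $H$ acts trivially on $M'$. For (ii) it suffices that $\prod_{d\notin D}\Phi_d(\alpha)\not\equiv 0\pmod p$, and this follows from the congruence $\Phi_{p^i e}(x)\equiv\Phi_e(x)^{\phi(p^i)}\pmod p$ (for $p\nmid e$): since $\alpha$ has order exactly $p-1$ in $\mathbb F_p^\times$, $\Phi_d(\alpha)\equiv0\pmod p$ holds precisely when the prime-to-$p$ part of $d$ equals $p-1$, i.e.\ precisely when $d\in D$. Hence every factor with $d\notin D$ is a unit mod $p$, giving $\overline S(M')=\mathbb F_p$ and therefore $\ed(G)\le\phi(p-1)\,p^{n-1}$.

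The main obstacle is exactly the integrality issue in step (ii): one must resist using the naive ``isotypic/idempotent'' component lattice for the span $\bigoplus_{d\in D}V_d$. That lattice is $H$-faithful but fails (ii) — for instance already for $q=9$ the $V_6$-component lattice is $g(t)\mathbb Z[H]$ with $g(-1)\equiv0\pmod 3$, so $\overline S$ vanishes on it and the compression is not faithful on $K$. The correct lattice $\prod_{d\notin D}\Phi_d(t)\,\mathbb Z[H]$ is the \emph{largest} sublattice with the prescribed rational span, and the cyclotomic-mod-$p$ computation is what guarantees it survives $\overline S$. I would also note separately the degenerate case $p=2$ (where $\omega$ is trivial and $R^\times$ need not be cyclic), checking that the same choice of $D=\{1,2,\dots,2^{n-1}\}$ and augmentation-mod-$2$ argument still yields the bound $\phi(p-1)p^{n-1}=2^{n-1}$.
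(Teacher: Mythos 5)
Your proof is correct, and it takes a genuinely different route from the paper's. The paper disposes of this theorem in two lines: it invokes the main theorem of Ledet's paper on these semidirect products, which gives the bound $\phi(p-1)p^{n-1}$ for the essential dimension of $C_q \rtimes C_q^*$ \emph{over} $\mathbb{Q}$, and then uses the monotonicity $\ed_{\mathbb{C}}(G) \leqslant \ed_{\mathbb{Q}}(G)$ under extension of the base field. You instead give a self-contained complex-geometric proof: starting from the faithful monomial representation $W = \operatorname{Ind}_K^G \chi$ of dimension $\phi(q)$, you compress the dense invariant torus $T \subset W$ along the $H$-stable sublattice $M' = \prod_{d \notin D} \Phi_d(t)\,\mathbb{Z}[H]$ of rank $\sum_{d \in D}\phi(d) = \phi(p-1)p^{n-1}$, and you keep the $G$-action on the quotient torus faithful. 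I checked the key steps and they are sound: the reduction of faithfulness to your conditions (i) and (ii) (a purported kernel element $kh$, evaluated at the identity of $T'$, splits into a trivial translation part and a trivial lattice part); the rank count $\sum_{i=0}^{n-1}\phi\bigl(p^i(p-1)\bigr) = \phi(p-1)p^{n-1}$; and the cyclotomic congruence $\Phi_{p^ie}(x)\equiv \Phi_e(x)^{\phi(p^i)} \pmod p$, which shows that $\Phi_d(\alpha)$ vanishes mod $p$ exactly for $d \in D$, so the sum map survives on $M'$. Your approach buys a constructive, citation-free argument over $\mathbb{C}$, together with an explicit faithful linearizable $G$-variety of the asserted dimension, and your observation that the naive isotypic lattice fails condition (ii) (already for $q=9$) is a genuine subtlety worth recording; the paper's approach buys brevity and implicitly the stronger statement over $\mathbb{Q}$.

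One small repair: for $p=2$ your bookkeeping with a cyclic generator of $H$ breaks down, since $(\mathbb{Z}/2^n\mathbb{Z})^\times$ is not cyclic for $n \geqslant 3$. But in that case every divisor of $|H| = 2^{n-1}$ lies in your set $D$, the product defining $M'$ is empty, and $M' = M$; the claimed bound $2^{n-1}$ is then simply $\ed(G) \leqslant \rd(G) \leqslant \dim W = 2^{n-1}$. It would be cleaner to say this outright than to gesture at rerunning the lattice argument.
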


\begin{proof}
The result follows from the main theorem of \cite{Ledet-semidirect-product} and the fact that essential dimension of a finite group over $\mathbb C$ is less than or equal to same over $\mathbb Q$.
\end{proof}

\begin{corollary} \label{cor.ratio}
For positive real number $\epsilon$ there exists a finite group $G$ such that 
% $\displaystyle 
\[ \frac{\ed(G)}{\rd(G)} < \epsilon. \]
\end{corollary}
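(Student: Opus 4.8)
The plan is to exhibit an explicit family of groups whose ratio $\ed(G)/\rd(G)$ tends to $0$. The natural candidates are the semidirect products $G = C_q \rtimes C_q^*$ appearing in Theorem~\ref{Th.Semidirect-product}, where $q = p^n$ is a prime power and $C_q^* \cong \mathbb{F}_q^\times$ is cyclic of order $q-1$ acting on $C_q \cong \mathbb{F}_q$ by multiplication. For such a group one can bound $\ed(G)$ from above using Theorem~\ref{Th.Semidirect-product}, while bounding $\rd(G)$ from below by a quantity that grows much faster; the ratio then goes to $0$.

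First I would record that $\ed(G) \leqslant \phi(p-1)\,p^{n-1}$ by Theorem~\ref{Th.Semidirect-product}. The point is that for fixed $p$ this grows like $p^{n-1}$ in $n$, whereas $\rd(G)$ grows like the full order-type quantity. Indeed, the subgroup $C_q \cong \mathbb{F}_q \cong (C_p)^n$ is an elementary abelian $p$-group of rank $n$ sitting inside $G$; a faithful representation of $G$ restricts to a faithful representation of $C_q$, and since $C_q$ acts nontrivially and transitively-up-to-scaling under $C_q^*$, its nontrivial characters form a single orbit of size $q-1 = p^n - 1$ under the $C_q^*$-action. Consequently any faithful representation of $G$ must contain an induced representation from this orbit, forcing $\rd(G) \geqslant p^n - 1$ (this is the standard computation: $G$ has a faithful irreducible of dimension $q-1$, and no faithful representation of smaller dimension exists because $C_q^*$ permutes the $q-1$ nontrivial characters of $C_q$ transitively, so all of them must appear).

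The computation of the ratio is then immediate. Taking $p = 2$ so that $\phi(p-1) = \phi(1) = 1$, we get
\[ \frac{\ed(G)}{\rd(G)} \leqslant \frac{p^{n-1}}{p^n - 1} = \frac{2^{n-1}}{2^n - 1} \, , \]
which tends to $1/2$ and is not good enough. To drive the ratio to $0$ I would instead fix the exponent at $n = 1$ and vary the prime $p$: then $G = C_p \rtimes C_{p-1}$ with $\ed(G) \leqslant \phi(p-1)$ and $\rd(G) = p - 1$, giving ratio $\leqslant \phi(p-1)/(p-1)$. Since $\phi(m)/m$ can be made arbitrarily small (take $m = p-1$ divisible by many small primes, e.g. $p-1 = 2\cdot 3\cdot 5 \cdots$ via Dirichlet's theorem on primes in arithmetic progression, so that $\phi(m)/m = \prod_{\ell \mid m}(1 - 1/\ell) \to 0$), for any $\epsilon > 0$ one can choose such a prime $p$ with $\phi(p-1)/(p-1) < \epsilon$, and the corresponding group $G = C_p \rtimes C_{p-1}$ satisfies $\ed(G)/\rd(G) < \epsilon$.

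**The main obstacle** I expect is pinning down the lower bound $\rd(G) \geqslant p - 1$ (equivalently, that the faithful irreducible of dimension $p-1$ is forced): one must verify that the $C_{p-1} = C_p^*$-action permutes the $p-1$ nontrivial characters of $C_p$ in a single orbit, so that faithfulness of the restriction to $C_p$ together with $C_{p-1}$-equivariance compels a representation of dimension at least $p-1$. This is the character-theoretic heart of the argument; the upper bound for $\ed$ is handed to us by Theorem~\ref{Th.Semidirect-product}, and the arithmetic fact $\phi(p-1)/(p-1) \to 0$ is classical, so the only real work is the representation-dimension lower bound.
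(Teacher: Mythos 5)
Your proposal is correct and is essentially the paper's own proof: the paper also takes $G = C_q \rtimes C_q^*$ for a prime $q \equiv 1 \pmod{p_1 p_2 \cdots p_r}$ supplied by Dirichlet's theorem, bounds $\ed(G) \leqslant \phi(q-1)$ via Theorem~\ref{Th.Semidirect-product}, and uses $\phi(q-1)/(q-1) \leqslant \prod_{i=1}^{r}(1 - 1/p_i) \to 0$. The only differences are cosmetic: your detour through prime powers $q = p^n$ (where, note, the paper's $C_q^*$ means $(\mathbb{Z}/q\mathbb{Z})^*$ rather than $\mathbb{F}_q^\times$, though this is immaterial once you specialize to $n = 1$) is discarded en route, and you supply the Clifford-theoretic justification of $\rd(G) \geqslant q-1$ that the paper dismisses as ``easy to see.''
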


\begin{proof} Let $a = p_1 \cdot p_2 \ldots \cdot p_r$ be the product of the first $r$ primes. By Dirichlet's Theorem there exists a prime number $q$ of the form $q = a x + 1$, where $x$ is an integer. Now let $G = C_q \rtimes C_q^*$.
Then it is easy to see that $\rd(G) = q- 1$. On the other hand, by Theorem~\ref{Th.Semidirect-product},
\[ \ed(G) \leqslant \phi(q-1) \leqslant \left(1 - \frac{1}{p_1}\right)\left(1 - \frac{1}{p_2}\right) \ldots \left(1 - \frac{1}{p_r}\right) (q-1), \]
where the last inequality follows from the fact that $q-1 = ax$ is divisible by the primes $p_1, \ldots, p_r$. Now observe that 
\[ \left(1 - \frac{1}{p_1}\right)\left(1 - \frac{1}{p_2}\right) \ldots \left(1 - \frac{1}{p_r}\right) \to 0 \]
as $r \to \infty$. Thus
choosing $r$ sufficiently large, we will achieve
\[ \frac{\ed(G)}{\rd(G)} \leqslant \
\left(1 - \frac{1}{p_1}\right)\left(1 - \frac{1}{p_2}\right) \ldots \left(1 - \frac{1}{p_r}\right) < \epsilon , \]
as desired.
\end{proof}

\section{Groups of small essential dimension}
\label{sect.small-ed}

\begin{theorem} \label{ED=1}
(a) A finite group has essential dimension $1$ if and only if it is either cyclic group or dihedral group of order $2n$ where $n$ is an odd number. 
  
(b) Let $G$ be a non-abelian finite group of representation dimension $2$. Then either (i) $G$ is isomorphic to the dihedral group $D_{2n},$ with odd $n$ or $\ed(G) = 2$.
\end{theorem}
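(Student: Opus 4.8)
The plan is to prove part (a) first and obtain part (b) as an immediate corollary. For the ``if'' direction of (a), a nontrivial cyclic group has a faithful one-dimensional representation, so $\rd = 1$ and hence $1 \leqslant \ed \leqslant 1$. For a dihedral group $D_{2n}$ of order $2n$ with $n$ odd, I would use that $\rd(D_{2n}) = 2$ (the standard rotation--reflection representation is faithful) together with the fact that the center $Z(D_{2n})$ is trivial when $n$ is odd. Lemma~\ref{lem.first}(a) then gives $\ed(D_{2n}) \leqslant \rd(D_{2n}) - 1 = 1$, while $\ed \geqslant 1$ because the group is nontrivial; hence $\ed(D_{2n}) = 1$.

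For the ``only if'' direction, suppose $\ed(G) = 1$. Then $G$ acts faithfully on a linearizable one-dimensional variety $X$, so there is a dominant rational $G$-equivariant map $V \dashrightarrow X$ from a finite-dimensional linear representation $V$. I would first argue that $X$ is rational: it is dominated by the rational variety $V$, hence unirational, hence rational by L\"uroth's theorem (dimension one, characteristic zero). Thus $\mathbb{C}(X) \cong \mathbb{C}(t)$, and the faithful $G$-action on $X$ yields an embedding $G \hookrightarrow \Aut_{\mathbb{C}}(\mathbb{C}(t)) = \PGL_2(\mathbb{C})$. By the classical classification of finite subgroups of $\PGL_2(\mathbb{C})$, the group $G$ must be cyclic, dihedral, $\operatorname{A}_4$, $\operatorname{S}_4$, or $\operatorname{A}_5$.

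To finish the forward direction I would eliminate every possibility except the cyclic and the dihedral-odd ones by means of an abelian-subgroup lower bound. Each of $\operatorname{A}_4$, $\operatorname{S}_4$, $\operatorname{A}_5$, as well as every dihedral group $D_{2n}$ with $n$ even, contains a copy of $C_2 \times C_2$. By Lemma~\ref{lem.first}(c) and (b), this forces $\ed(G) \geqslant \ed(C_2 \times C_2) = \rank(C_2 \times C_2) = 2$, contradicting $\ed(G) = 1$. Hence $G$ is cyclic or dihedral of order $2n$ with $n$ odd, which completes the proof of (a).

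Part (b) is then formal: $\rd(G) = 2$ gives $\ed(G) \leqslant 2$ by~\eqref{e.ed-rd}, and since $G$ is non-abelian it is not cyclic, so if $\ed(G) = 1$ then by (a) the group must be $D_{2n}$ with $n$ odd. Consequently, whenever $G$ is not of this form we have $\ed(G) \geqslant 2$, forcing $\ed(G) = 2$. I expect the only genuinely delicate point to be the forward direction of (a)---namely reducing a faithful linearizable curve action to an embedding into $\PGL_2(\mathbb{C})$, which requires the rationality (L\"uroth) step and the identification of the automorphism group of the rational function field. Once that embedding is in hand, the classification of finite subgroups of $\PGL_2(\mathbb{C})$ together with the rank lower bound $\ed(C_2 \times C_2) = 2$ makes the remaining pruning routine, and the backward direction of (a) as well as all of (b) follow formally from Lemma~\ref{lem.first}.
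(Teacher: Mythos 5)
Your proposal is correct, but it takes a genuinely different route from the paper on part (a): the paper's entire proof of (a) is a citation to Buhler--Reichstein \cite[Theorem 6.2]{B-R}, whereas you reprove that theorem from scratch. Your reconstruction---compress a faithful linearizable curve to a rational curve via L\"uroth, obtain an embedding $G \hookrightarrow \Aut_{\mathbb{C}}(\mathbb{C}(t)) = \PGL_2(\mathbb{C})$, invoke Klein's classification of finite subgroups of $\PGL_2(\mathbb{C})$, and then discard $\Alt_4$, $\Sym_4$, $\Alt_5$ and the even dihedral groups because each contains $C_2 \times C_2$, whose essential dimension is $2$ by Lemma~\ref{lem.first}(b) and (c)---is in substance the original Buhler--Reichstein argument, and every step checks out; the only point you use tacitly is that a faithful action on an irreducible variety induces a faithful action on its function field, which is standard. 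Your ``if'' direction is also the standard one: for odd $n$ the group $D_{2n}$ has trivial center, so Lemma~\ref{lem.first}(a) (i.e., the faithful action on $\mathbb{P}(V)$ for the $2$-dimensional faithful representation $V$) gives $\ed(D_{2n}) \leqslant \rd(D_{2n}) - 1 = 1$. Your derivation of (b) from (a) together with $\ed(G) \leqslant \rd(G)$ coincides exactly with the paper's. What the paper's citation buys is brevity; what your version buys is a self-contained proof, at the cost of importing the classical inputs (L\"uroth's theorem and the classification of finite subgroups of $\PGL_2(\mathbb{C})$) that the citation to \cite{B-R} encapsulates.
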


\begin{proof}
(a) is \cite[Theorem~6.2]{B-R}. (b) is an immediate consequence of (a) and the inequality
$\ed(G) \leqslant \rd(G)$; see~\eqref{e.ed-rd}.
%    We know that the essential dimension 
% of a finite group $G$ is either less than 
% or equal to representation dimension of it. % Now the lemma follows from
% Theorem~\ref{ED=1}.
\end{proof}

Finite groups of essential dimension $2$ has been classified by Duncan in \cite{Duncan-ed2}. We recall this classification here for convenience the  of the reader.

\begin{theorem}\cite[Theorem 1.1]{Duncan-ed2}\label{ED=2}
Let $T=(\mathbb{C}^{*})^2$ be a two dimensional torus. If $G$ is a finite group of essential dimension $2$ then $G$ is isomorphic to a subgroup of one the following groups:
\begin{enumerate}[{\rm (i)}]
    \item $GL_2(\mathbb{C}),$ the general linear groups of degree $2$.
    \item $T \rtimes \mathcal{G}_1 $ with $|G\cap T|$ coprime to $2$ and $3,$ and 
$\displaystyle \mathcal{G}_1=\left\langle \begin{pmatrix}1 & -1\\ 1 & 0 \end{pmatrix}, \begin{pmatrix} 0 & 1 \\ 1 & 0 \end{pmatrix} \right\rangle \simeq D_{12} $.  
\item $T \rtimes \mathcal{G}_2 $ with $|G\cap T|$ coprime to $2,$  and 
$\displaystyle \mathcal{G}_2=\left\langle \begin{pmatrix}-1 & 0\\ 0 & 1 \end{pmatrix}, \begin{pmatrix} 0 & 1 \\ 1 & 0 \end{pmatrix} \right\rangle \simeq D_{8}$. 
\item $T \rtimes \mathcal{G}_3 $ with $|G\cap T|$ coprime to $3,$ and 
$\displaystyle \mathcal{G}_3=\left\langle \begin{pmatrix}0 & -1\\ 1 & -1 \end{pmatrix}, \begin{pmatrix} 0 & -1 \\ -1 & 0 \end{pmatrix} \right\rangle \simeq \operatorname{S}_{3}$. 
\item $T \rtimes \mathcal{G}_4 $ with $|G\cap T|$ coprime to $3$, and 
$\displaystyle \mathcal{G}_4=\left\langle \begin{pmatrix}0 & -1\\ 1 & -1 \end{pmatrix}, \begin{pmatrix} 0 & 1 \\ 1 & 0 \end{pmatrix} \right\rangle \simeq \operatorname{S}_{3}$. 

\smallskip
\item $PSL_2(\mathbb{F}_7)$, the simple group of order $168$.

\smallskip
\item $\operatorname{S}_5$, the symmetric group of degree $5$.

\medskip
Furthermore, any finite subgroup of these groups has essential dimension $\leqslant 2$.
\end{enumerate}
\end{theorem}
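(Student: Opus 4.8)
\emph{This result is Duncan's theorem, quoted from~\cite{Duncan-ed2}; what follows is an outline of the strategy one would follow to prove it.}

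The plan is to translate the problem into the equivariant birational geometry of surfaces. First I would observe that $\ed(G) \leqslant 2$ means precisely that $G$ acts faithfully, and after passing to a dense open subset generically freely, on a linearizable surface $X$. By definition there is a dominant $G$-equivariant rational map $V \dasharrow X$ from a linear representation $V$, so $X$ is unirational; since $\dim X = 2$ and we work over $\mathbb{C}$, Castelnuovo's rationality criterion forces $X$ to be rational. Thus the classification reduces to the question: which finite groups admit a generically free \emph{linearizable} action on a rational surface? In particular every such $G$ embeds into the plane Cremona group $\mathrm{Cr}_2 = \mathrm{Bir}(\mathbb{P}^2)$.

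Next I would run the $G$-equivariant minimal model program (Manin, Iskovskikh). Every rational $G$-surface is $G$-birational to a minimal one, which is either a del Pezzo surface $X$ with $\mathrm{Pic}(X)^G \cong \mathbb{Z}$ or a $G$-equivariant conic bundle $X \to \mathbb{P}^1$. Feeding this into the Dolgachev--Iskovskikh classification of finite subgroups of $\mathrm{Cr}_2$ produces a finite list of candidate groups $G$ together with the surfaces on which they act. The toric candidates are exactly the groups $T \rtimes \mathcal{G}_i$, where $T = (\mathbb{C}^{*})^2$ is the torus and $\mathcal{G}_i \subset \GL_2(\mathbb{Z}) = \mathrm{Aut}(T)$ are the finite groups appearing in cases {\rm (ii)}--{\rm (v)}; the arithmetic conditions on $|G \cap T|$ record precisely when the associated toric surface action remains in the linearizable range.

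The hard part will be the last step: deciding which of these minimal rational $G$-surfaces are actually \emph{linearizable}, i.e.\ admit a dominant $G$-equivariant rational map from a linear representation, since linearizability is far more restrictive than mere rationality. This is where the genuine content of the theorem lies. The surfaces that survive fall into three families: linear actions on $\mathbb{P}^2$ or $\mathbb{A}^2$, giving the subgroups of $\GL_2(\mathbb{C})$ in case {\rm (i)}; torus-normalizer actions on toric surfaces, giving the groups $T \rtimes \mathcal{G}_i$ in cases {\rm (ii)}--{\rm (v)}; and two exceptional del Pezzo surfaces, namely the quintic del Pezzo surface $\overline{M}_{0,5}$ with its $\Sym_5$-action in case {\rm (vii)}, and the degree-two del Pezzo surface (the double cover of $\mathbb{P}^2$ branched along the Klein quartic) whose automorphisms realize $\PSL_2(\mathbb{F}_7)$ in case {\rm (vi)}. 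For each of these I would establish linearizability by exhibiting an explicit equivariant unirational parametrization by a representation, and conversely rule out every other minimal $G$-surface by showing that its $G$-action cannot be so parametrized.

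Finally, for the converse (``furthermore'') assertion I would record that each group on the list does act faithfully and generically freely on a linearizable surface of dimension $2$ --- $\GL_2(\mathbb{C})$ on $\mathbb{A}^2$, each $T \rtimes \mathcal{G}_i$ on the torus $T$, $\Sym_5$ on $\overline{M}_{0,5}$, and $\PSL_2(\mathbb{F}_7)$ on the degree-two del Pezzo surface --- so that each has essential dimension $\leqslant 2$, and then invoke Lemma~\ref{lem.first}(c) to conclude that every finite subgroup of these groups also has essential dimension $\leqslant 2$.
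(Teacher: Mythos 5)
The first thing to note is that the paper does not prove this theorem at all: it is quoted from Duncan~\cite{Duncan-ed2} (as the citation in the theorem header indicates) and used as a black box, so there is no internal proof to compare against. Your outline does, at the level of strategy, track Duncan's actual argument: interpret $\ed(G)\leqslant 2$ as the existence of a faithful linearizable (in Duncan's terminology, versal) $G$-surface, use Castelnuovo's criterion to upgrade unirationality to rationality, run the $G$-equivariant minimal model program to reduce to minimal del Pezzo surfaces and conic bundles as in the Dolgachev--Iskovskikh classification of finite subgroups of the plane Cremona group, and then prune that list by deciding which minimal $G$-surfaces are actually versal --- which is indeed where the real content lies and where the coprimality conditions on $|G\cap T|$ come from.

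There is, however, one concrete error in your sketch. In case (vi) the witness surface is $\mathbb{P}^2$, with $\PSL_2(\mathbb{F}_7)$ acting through the projectivization of one of its faithful $3$-dimensional representations $V$; this is linearizable essentially by definition ($V \dasharrow \mathbb{P}(V)$ is dominant and equivariant), which is also exactly how the paper's Lemma~\ref{lem.first}(a) gives $\ed \leqslant \rd - 1 = 2$ for this centerless group. The degree-two del Pezzo surface you propose --- the double cover of $\mathbb{P}^2$ branched along the Klein quartic --- does carry a faithful minimal $\PSL_2(\mathbb{F}_7)$-action, but that action is \emph{not} linearizable: it is $G$-birationally rigid and yields a subgroup of the Cremona group that is not conjugate to the linear one (Dolgachev--Iskovskikh), and correspondingly degree-two del Pezzo surfaces do not occur among Duncan's minimal versal surfaces. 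So your step ``establish linearizability by exhibiting an explicit equivariant unirational parametrization'' would fail on that surface. A second, smaller point: your closing argument for the ``furthermore'' clause (``each $T \rtimes \mathcal{G}_i$ acts on the torus $T$'') cannot be taken literally, since $T \rtimes \mathcal{G}_1$ contains, for example, $T[2] \times \langle -I \rangle \simeq C_2 \times C_2 \times C_2$, whose essential dimension is $3$; the coprimality hypotheses on $|G \cap T|$ are part of the statement and must be carried into the versality argument for the toric cases --- a restriction you acknowledge earlier in the sketch but drop at the end.
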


\begin{example} \label{ex.(39,1)} Let $G$ be the semidirect product of $C_{13} = \langle a \rangle$ and $C_3= \langle b \rangle$, where $b$ acts on $C_{13}$ by $a \to a^3$. {\rm (}The Gap Id of this group is $(39,1)$.{\rm)} Then $\ed(G) =2$.

\smallskip
Indeed, $G$ is neither cyclic nor odd dihedral, so $\ed(G) \geqslant 2$. It remains to prove that $\ed(G) \leqslant 2$. We will do this by showing that $G$ is subgroup of $T \rtimes \mathcal{G}_3 $ from Theorem ~\ref{ED=2}(iv).

\smallskip
Let $T[13] \simeq C_{13} \times C_{13}$ % =\langle a_1 , a_2 \rangle  
be the 13-torsion subgroup of $T=(\mathbb{C}^{*})^2$. Let 
$m :=  \begin{pmatrix}0 & -1\\ 1 & -1 \end{pmatrix}$ be an element of order $3$ in the group
% and $C_3 =\langle m \rangle$, be the subgroup of 
$\mathcal{G}_3 \simeq \operatorname{S}_{3}$ from Theorem ~\ref{ED=2}(iv). Consider the subgroup  $H := T[13]\rtimes C_3$ of $T \rtimes \mathcal{G}_3 $, where
$C_3 = \langle m \rangle$. Then $H$ has the following presentation:
$$H = \langle a_1, a_2, m ~|~ a_1^{13}=a_2^{13}=m^3=1, a_1a_2=a_2a_1, ma_1m^{-1}=a_2^{-1}, ma_2m^{-1}=a_1a_2^{-1} \rangle.$$

\smallskip
Now consider the subgroup $K=\langle a_1a_2^3, m \rangle$ of $H$ and note that $m$ acts on $a_1a_2^3$ by sending it to $(a_1a_2^3)^3$. This implies that the 
homomorphism
$G = \langle a, b \rangle \to H$ taking 
$a$ to $a_1 a_2^3$ and $b$ to $m$ is 
well defined and is an isomorphism between $G$ and $K$.  By Theorem~\ref{ED=2}, $\ed(G)= \ed(K) \leqslant 2$, as claimed.
\end{example} 

\begin{example} \label{ex.(60,7)} Let $G$ be the semidirect product of $C_{15} = \langle a \rangle$ and $C_4 = \langle b \rangle$, where $b$ acts on $C_{15}$ by $a \to a^2$. {\rm (}The Gap Id of this group is $(60,7)$.{\rm)} Then $\ed(G) \geqslant 3$.

\smallskip
Clearly $G$ is not cyclic or odd dihedral, so $\ed(G) \geqslant 2$. It thus remains to show that $G$ is not a subgroup of
any of the groups listed in Theorem~\ref{ED=2}.

Case (i) is ruled out as $\rd(G)=4$. 

Case (ii). Suppose that $G$ embeds into $T \rtimes \mathcal{G}_1$ so that
the order of $G \cap T$ is coprime to $2$ and $3$. Then $G_0 = G \cap T$ has order $1$ or $5$, and $G/G_0$ is isomorphic to a subgroup of $D_{12}$. If $|G_0| = 1$, then $|G/G_0| = |G| = 60$, so clearly $G/G_0$ cannot be isomorphic to a subgroup of $D_{12}$. If $|G_0| = 5$, i.e., $G_0 = \langle a^3 \rangle$,
then $G/G_0 \simeq C_3 \rtimes C_4$ has order $12$ and hence, must be isomorphic to $D_{12}$. This is easily seen not be the case. In particular,
the commutator is isomorphic to $C_3$ in both cases but modulo the commutator,
$C_3 \rtimes C_4$ is isomorphic to $C_4$, where as $D_{12}$ modulo the commutator is isomorphic to $C_2 \times C_2$. 

Case (iii). Assume the contrary: $G$ embeds into $T \rtimes \mathcal{G}_1$
so that $G \cap T$ is coprime to $2$. Then modulo $T$, $b$ projects to an element of order $4$ in $\displaystyle \mathcal{G}_2=\left\langle \begin{pmatrix}-1 & 0\\ 0 & 1 \end{pmatrix}, \begin{pmatrix} 0 & 1 \\ 1 & 0 \end{pmatrix} \right\rangle \simeq D_{8}$. 
Hence, $b^2$ projects to the central element $\displaystyle \begin{pmatrix}-1 & 0\\ 0 & -1 \end{pmatrix}$ of 
$\mathcal{G}_2$. In other words, conjugation by $b^2$ sends 
every $t \in T$ to $t^{-1}$. Consequently, $b^2$ does not commute with any element of $T$ whose order is not $2$. On the other hand, $b^2$ commutes with $a^5$, whose order is $3$, a contradiction.

Cases (iv) and (v). Once again, assume the contrary: $G$ embeds into $T \rtimes \Sym_3$ so that $G \cap T$ is coprime to $3$. Since $|\Sym_3|$ is not divisible by $5$ or $4$, $G \cap T$ contains the element $a^3$ of order $5$ and the element $b^2$ of order $2$. (Note that a priori, $b$ may project to an element of order $2$ in $\Sym_3$, but $b^2$ has to project to the identity element.) This is impossible because $a^3$ and $b^2$ do not commute in $G$.

Case (vi) is ruled out because $168$ is not divisible by $60$.
 
Case (vii) is ruled out as $\Sym_5$ has only one subgroup of order $60$.
This subgroup is $\Alt_5$, which is not isomorphic to $G$.
\qed
\end{example}

\begin{proposition}\label{Prop.non-trivial-center}
Let $G$  be a finite group with non-trivial center.
\begin{enumerate}
    \item If $\rd(G)=3$ then $\ed(G)=3$.
    \item If $\rd(G)\geqslant 3$ then $\ed(G) \geqslant 3$. 
\end{enumerate}
\end{proposition}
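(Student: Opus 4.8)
The two parts are closely linked, so the plan is to deduce part (1) from part (2) and then concentrate on (2). Granting (2), part (1) is immediate: Lemma~\ref{lem.first}(a) gives $\ed(G) \le \rd(G) = 3$, while (2) supplies the reverse inequality $\ed(G) \ge 3$, so $\ed(G) = 3$. For part (2) I would argue by contraposition, proving: if $\ed(G) \le 2$ and $Z(G) \ne 1$, then $\rd(G) \le 2$. (If $G$ is itself a $p$-group there is nothing to prove, since $\ed(G) = \rd(G)$ by Theorem~\ref{p-groups}, so $\rd(G) \ge 3$ already forces $\ed(G) \ge 3$ with no hypothesis on the center.)

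First I would dispose of the case $\ed(G) \le 1$: by Theorem~\ref{ED=1}(a) such a $G$ is cyclic or an odd dihedral group, and both satisfy $\rd(G) \le 2$. Hence I may assume $\ed(G) = 2$ and invoke Duncan's classification, Theorem~\ref{ED=2}, so that $G$ embeds into one of the seven groups (i)--(vii). Case (i), $G \le \GL_2(\mathbb C)$, gives $\rd(G) \le 2$ by definition. In the exceptional cases (vi) and (vii), $G$ is a subgroup of $\PSL_2(\mathbb F_7)$ or $\Sym_5$, and here I would run a short finite check (the Sylow $2$-subgroup has order $8$ in each, so no elementary abelian subgroup of rank $3$ occurs; the only non-abelian subgroups with non-trivial center are of dihedral type, namely $D_8$ and, in $\Sym_5$, also $C_2 \times \Sym_3$), concluding that every subgroup with non-trivial center has a faithful two-dimensional representation, i.e. $\rd(G) \le 2$.

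The heart of the argument is the toral cases (ii)--(v), where $G \le T \rtimes \mathcal G_i$ with $T = (\mathbb C^*)^2$ and the order of $G_0 := G \cap T$ is coprime to the primes indicated in Theorem~\ref{ED=2}. Writing $\bar G := G/G_0 \hookrightarrow \mathcal G_i$, a direct commutator computation in the semidirect product shows that a central element $z \in Z(G)$ acts on $G_0$ through its image $\bar z \in \mathcal G_i$; hence $\bar z$ must fix $G_0$ pointwise and lie in the center of $\bar G$. Since each $\mathcal G_i$ acts on the character lattice of $T$ with no nonzero invariants, the plan is to combine these constraints with the coprimality hypotheses to force $G$ to be conjugate into the group of monomial matrices $N_{\GL_2(\mathbb C)}(T_{\mathrm{diag}}) \subset \GL_2(\mathbb C)$, whence $\rd(G) \le 2$. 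The coprimality conditions are indispensable: they eliminate the $2$- and $3$-torsion that would otherwise allow subgroups such as $C_m \times D_{2b}$ with $b$ even, whose representation dimension is $3$.

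The main obstacle is precisely this toral analysis. One must track carefully how the non-trivial-center hypothesis interacts with the fixed-point structure of $\mathcal G_1, \dots, \mathcal G_4$ on $T$ and with the coprimality conditions, treating the possible images $\bar G$ one at a time; when $z \notin T$ the requirement $\bar z \in Z(\bar G)\setminus\{1\}$ forces $\bar G$ to be small, which is what makes the monomial reduction feasible. By contrast, the reductions to cases (i), (vi), (vii) and to $\ed(G) \le 1$ are routine, and the passage from (2) to (1) is purely formal.
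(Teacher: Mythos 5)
Your overall skeleton matches the paper's: reduce (1) to (2) via $\ed(G)\leqslant\rd(G)$, argue (2) by contraposition, dispose of $\ed(G)\leqslant 1$ via Theorem~\ref{ED=1}(a), and then treat $\ed(G)=2$ through Duncan's work. The handling of cases (i), (vi), (vii) is also fine (your finite check of subgroups of $\PSL_2(\mathbb F_7)$ and $\Sym_5$ with non-trivial center is correct). But there is a genuine gap at exactly the point you flag yourself: the toral cases (ii)--(v) are not proved, only announced as a ``plan.'' The claim you need --- that every subgroup of $T\rtimes\mathcal G_i$ satisfying the coprimality conditions and having non-trivial center admits a faithful $2$-dimensional representation --- is the entire mathematical content of the statement in these cases, and your sketch does not establish it. Note also that the phrase ``conjugate into the group of monomial matrices $N_{\GL_2(\mathbb C)}(T_{\mathrm{diag}})$'' is not well-posed: $T\rtimes\mathcal G_i$ is \emph{not} a subgroup of $\GL_2(\mathbb C)$ (its ``Weyl part'' $\mathcal G_i$ is $D_{12}$, $D_8$ or $\Sym_3$, and elements such as $-I\in\mathcal G_1,\mathcal G_2$ act on $T$ by inversion, which is not induced by any element of $N_{\GL_2(\mathbb C)}(T)$), so there is no ambient group in which to conjugate; what you would actually have to produce is an abstract embedding $G\hookrightarrow\GL_2(\mathbb C)$, constructed case by case from the constraints on $Z(G)$, $G\cap T$ and $\bar G\leqslant\mathcal G_i$. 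That analysis is feasible but substantial, and nothing in your proposal carries it out.

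The paper sidesteps all of this: for $\ed(G)=2$ it cites \cite[Prop.~2.5]{Duncan-ed2}, which states precisely that a finite group of essential dimension $2$ with non-trivial center embeds into $\GL_2(\mathbb C)$, giving $\rd(G)\leqslant 2$ in one line. In effect you are attempting to re-derive that proposition from Duncan's classification (the paper's Theorem~\ref{ED=2}), and the re-derivation is exactly the part left unproven. To repair your argument, either invoke Duncan's Proposition 2.5 directly, or complete the case analysis for (ii)--(v): for each subgroup $\bar G\leqslant\mathcal G_i$ that can centralize a non-trivial element, use the coprimality of $|G\cap T|$ with the torsion killed in each case to show the extension $1\to G\cap T\to G\to\bar G\to 1$ embeds into $T\rtimes C_2\leqslant\GL_2(\mathbb C)$; as written, that step is missing.
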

\begin{proof} We only need to prove (b). Since $\ed(G) \leqslant \rd(G)$, part (a) will follow.

To prove (b), we argue by contradiction. Assume that $\ed(G) \leqslant 2$. Our goal is to show that $\rd(G) \leqslant 2$.

If $\ed(G) = 1$ then by Theorem~\ref{ED=1}(a), 
$G$ is either cyclic or odd dihedral. Since the center of any odd dihedral group is trivial, we conclude that $G$ has to be cyclic. Then $\rd(G) = 1$.

Now assume $\ed(G) = 2$. In this case~\cite[Prop. 2.5]{Duncan-ed2}, tells us that $G$ embeds into $\GL_2(\mathbb C)$. In other words, $\rd(G) \leqslant 2$, as claimed.
\end{proof}

\section{Groups of order $48$}
\label{sect.prel48}

\begin{lemma} \label{lem.(48,50)}
Let $G$ be the semidirect product of $\mathbb{F}_2^4 \rtimes C_3$, where
a generatior of $C_3$ acts on $\mathbb{F}_2^4$ by the matrix 
\begin{equation} \label{e.4x4}
\begin{pmatrix} 0 & 1 & 0 & 0 \\ 1 & 1 & 0 & 0 \\ 0 & 0 & 0 & 1 \\ 0 & 0 & 1 & 1 \end{pmatrix}. 
\end{equation}
Here we identify the $4$-dimensional vector space $\mathbb F_2^4$ with the
elementary abelian group $C_2 \times C_2 \times C_2 \times C_2$. The group $G$ defined this way has GAP Id $(48,50)$. Then $G$ embeds into $\Alt_4 \times \Alt_4$, where $\Alt_4$ is the alternating group on four letters. 
\end{lemma}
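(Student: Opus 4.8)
The plan is to exhibit an explicit embedding, exploiting the fact that the matrix in~\eqref{e.4x4} is block diagonal with two identical $2 \times 2$ blocks, each equal to the companion matrix $M = \left(\begin{smallmatrix} 0 & 1 \\ 1 & 1 \end{smallmatrix}\right)$ of the irreducible polynomial $x^2 + x + 1 \in \mathbb{F}_2[x]$. Since $M^3 = I$ and $M \neq I$, the element $M$ has order $3$ in $\GL_2(\mathbb{F}_2) \simeq \Sym_3$, and it acts on $\mathbb{F}_2^2$ without nonzero fixed points. Consequently the $C_3$-action on $\mathbb{F}_2^4 = \mathbb{F}_2^2 \oplus \mathbb{F}_2^2$ defining $G$ is the diagonal action in which a generator $c$ of $C_3$ acts as $M$ on each summand.

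First I would recall the internal structure of $\Alt_4$. Let $V \simeq \mathbb{F}_2^2$ be the (characteristic) Klein four subgroup of $\Alt_4$, consisting of the identity and the three double transpositions, and let $c_0 = (1\,2\,3)$. Then $\Alt_4 = V \rtimes \langle c_0 \rangle$, and conjugation by $c_0$ cyclically permutes the three involutions of $V$, i.e.\ acts as a fixed-point-free element of order $3$ in $\Aut(V) \simeq \GL_2(\mathbb{F}_2)$. Because all elements of order $3$ in $\GL_2(\mathbb{F}_2)$ are conjugate, I may choose a basis of $V$ in which $c_0$ acts by the matrix $M$.

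Using this, I would define the map $\phi \colon G \to \Alt_4 \times \Alt_4$ as follows. Identify $\mathbb{F}_2^4 = \mathbb{F}_2^2 \oplus \mathbb{F}_2^2$ with $V \times V \subseteq \Alt_4 \times \Alt_4$ by sending the chosen basis of the first (resp.\ second) summand to the chosen basis of $V$ in the first (resp.\ second) factor, and send the generator $c$ of $C_3$ to the diagonal element $(c_0, c_0)$. To see that $\phi$ is a well-defined homomorphism it suffices, by the semidirect-product (equivalently, presentation) description of $G$, to check that $(c_0, c_0)$ has order $3$ and that conjugation by $(c_0, c_0)$ on $V \times V$ agrees with the prescribed action~\eqref{e.4x4} after the above identification. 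The order statement is clear. For the action, conjugation by $(c_0, c_0)$ acts as $c_0$ on each factor $V$, hence block diagonally with both blocks equal to $M$; this is exactly~\eqref{e.4x4}.

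Finally, injectivity is immediate: $\phi$ restricts to an isomorphism of the normal subgroup $\mathbb{F}_2^4$ onto $V \times V$, while $\phi(c) = (c_0, c_0)$ has order $3$ and lies outside $V \times V$, so the image $\langle V \times V, (c_0, c_0) \rangle$ has order $16 \cdot 3 = 48 = |G|$. Thus $\phi$ is an embedding of $G$ into $\Alt_4 \times \Alt_4$. The only point requiring care is the basis matching in the previous paragraph --- that the conjugation action of a $3$-cycle on the Klein four-group is, in a suitable basis, literally the block $M$ appearing in~\eqref{e.4x4} --- but this is forced by the conjugacy of the order-$3$ elements in $\GL_2(\mathbb{F}_2)$, and is the crux of the argument.
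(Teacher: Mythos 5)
Your proof is correct and takes essentially the same approach as the paper: both exploit the block-diagonal form of~\eqref{e.4x4} to reduce to the $2\times 2$ block $M$ and then identify $\mathbb{F}_2^2 \rtimes_M C_3$ with $\Alt_4$, embedding $G$ diagonally (on the $C_3$ factor) into $\Alt_4 \times \Alt_4$. The only difference is cosmetic: the paper establishes $\mathbb{F}_2^2 \rtimes C_3 \simeq \Alt_4$ via the affine action of this group on the four points of $\mathbb{F}_2^2$, giving a homomorphism to $\Sym_4$ with image $\Alt_4$, whereas you work from the internal decomposition of $\Alt_4$ as Klein four-group $\rtimes$ $3$-cycle and use conjugacy of order-$3$ elements in $\GL_2(\mathbb{F}_2)$ to match bases; both verifications are valid.
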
  

\begin{proof} The $4 \times 4$ matrix given by~\eqref{e.4x4} is block-diagonal with two identical diagonal $2 \times 2$ blocks,
\begin{equation} \label{e.2x2}
\begin{pmatrix} 0 & 1 \\ 1 & 1 \end{pmatrix}.
\end{equation}
Hence, $G$ embeds into the product of two copies 
of $H = \mathbb F_2^2 \rtimes C_3$ via
\[ ((a_1, a_2, a_3, a_3), \, c) \mapsto \big( (a_1, a_2), \, c), \; (a_3, a_4), c) \big) , \]
for any $a_1, a_2, a_3, a_4 \in \mathbb F_2$ and any $c \in C_3$. Here the
a generator of $C_3$ acts on $\mathbb F_2^2$ via the matrix~\eqref{e.2x2}.

It remains to show that $H := \mathbb F_2^2 \rtimes C_3$ is isomorphic to the alternating group $\Alt_4$. Indeed, $H$ acts on the four elements of $\mathbb F_2^2$ as follows: $\mathbb F_2^2$ acts by translations (the regular action) and
$C_3$ acts by $\mathbb F_2$-linear transformations as in~\eqref{e.2x2}. This 
gives rise to a group homomorphism $H \to \Sym_4$, whose image is readily seen to be $\Alt_4$.
\end{proof}

\begin{proposition} \label{prop.48}
Let $G$ be a group of order $48$, and $G_2$ and $G_3$ be $2$-Sylow and $3$-Sylow subgroups of $G$, respectively.
Assume $G_3$ is normal in $G$. Then

\smallskip
(a) either (i) $G \simeq G_2 \times G_3$ or (ii) there exists a surjective homomorphism $f \colon G \to \Sym_3$.

\smallskip
(b) There exists an injective group homomorphism $G \hookrightarrow G/G_3 \times \Sym_3 \simeq G_2 \times \Sym_3$.

\smallskip
(c) $\ed(G) \leqslant \ed(G_2) + 1$.
\end{proposition}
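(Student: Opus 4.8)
The plan is to prove the three parts in order, using the structure theory of groups of order $48 = 2^4 \cdot 3$ together with the normality of the Sylow $3$-subgroup $G_3 \simeq C_3$. Since $|G| = 48$ and $|G_2| = 16$, $[G:G_2] = 3$, while $|G_3| = 3$ and $G_3$ is assumed normal. The conjugation action of $G$ on the normal subgroup $G_3 \simeq C_3$ gives a homomorphism $\varphi \colon G \to \Aut(C_3) \simeq C_2$, and the action of $G_3$ on itself is trivial, so $\varphi$ factors through $G/G_3$; moreover $G_3 \leqslant \ker \varphi$ since $G_3$ is abelian.

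First I would prove (a). The key dichotomy is whether the $2$-Sylow subgroup $G_2$ centralizes $G_3$ or not. Because $G_3 \simeq C_3$ is normal, $G$ acts on it by conjugation via $\varphi \colon G \to \Aut(C_3) \simeq C_2$. If this action is trivial, then $G_3$ is central in $G$; since $\gcd(|G_3|, |G_2|) = 1$ and $G_3$ is normal, Schur--Zassenhaus (or a direct splitting argument) yields $G \simeq G_2 \times G_3$, which is case (i). If the action is nontrivial, then $\varphi$ is surjective onto $C_2$; combining $\varphi$ with the fact that $G_3 \rtimes C_2 \simeq \Sym_3$ sits inside $G$, I would construct the surjection $f \colon G \to \Sym_3$ in (ii). Concretely, $G_3 = C_3$ is normal and $\varphi$ supplies an order-$2$ action on it, so the semidirect product structure on $G_3 \cdot (\text{order-}2\text{ part})$ maps onto $\Sym_3 = C_3 \rtimes C_2$; one then checks this extends to a surjection from all of $G$ by letting the kernel absorb the rest of the $2$-part.

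Next, for (b), I would combine (a) with the quotient map $q \colon G \to G/G_3 \simeq G_2$. In case (i) the product map $G \to G_2 \times G_3 \hookrightarrow G_2 \times \Sym_3$ (embedding $G_3 \simeq C_3$ into $\Sym_3$) is already an isomorphism onto its image. In case (ii), I would take the diagonal-type map $(q, f) \colon G \to (G/G_3) \times \Sym_3 \simeq G_2 \times \Sym_3$. Injectivity is the crux: the kernel of $(q,f)$ is $\ker q \cap \ker f = G_3 \cap \ker f$. Since $f$ restricted to $G_3$ is an isomorphism onto the normal $C_3 \subseteq \Sym_3$ (by construction $f$ does not kill $G_3$), we get $G_3 \cap \ker f = 1$, so $(q,f)$ is injective. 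This yields the embedding $G \hookrightarrow G_2 \times \Sym_3$.

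Finally, (c) follows formally from (b) together with the subadditivity and monotonicity of essential dimension. Applying Lemma~\ref{lem.first}(c) and~(e) to the embedding $G \hookrightarrow G_2 \times \Sym_3$ gives
\[ \ed(G) \leqslant \ed(G_2 \times \Sym_3) \leqslant \ed(G_2) + \ed(\Sym_3). \]
Since $\Sym_3 = D_6$ is dihedral of order $2 \cdot 3$ with $3$ odd, Theorem~\ref{ED=1}(a) gives $\ed(\Sym_3) = 1$, and the bound $\ed(G) \leqslant \ed(G_2) + 1$ follows. The main obstacle I anticipate is the careful bookkeeping in part (a): verifying that the nontrivial-action case genuinely produces a \emph{surjective} map onto $\Sym_3$ (rather than merely onto $C_2$), and ensuring in part (b) that the constructed $f$ is compatible with $q$ so that their combination is injective. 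Once the surjection $f$ in (a)(ii) is pinned down with $f|_{G_3}$ an isomorphism onto $C_3 \subseteq \Sym_3$, parts (b) and (c) are essentially formal.
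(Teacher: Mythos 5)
Parts (b) and (c) of your proposal are fine, and your dichotomy in (a) (conjugation action of $G$ on $G_3$ trivial or not) is equivalent to the paper's (whether $G_2$ is normal); the trivial-action case, via Schur--Zassenhaus, correctly yields $G \simeq G_2 \times G_3$. The genuine gap is in the nontrivial-action case: your construction of the surjection $f \colon G \to \Sym_3$ rests on the claim that ``$G_3 \rtimes C_2 \simeq \Sym_3$ sits inside $G$,'' and this is false in general. Take $G = C_3 \rtimes C_{16}$ (GAP Id $(48,1)$), where a generator $t$ of $C_{16}$ inverts $C_3$: the action on $G_3$ is nontrivial, but the only involution in $G$ is $t^8$, which is central, so $G$ contains no subgroup isomorphic to $\Sym_3$. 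Moreover, even when such a subgroup does exist, a homomorphism defined on a subgroup does not ``extend'' to $G$ by ``letting the kernel absorb the rest of the $2$-part''; what is actually needed is a \emph{normal} subgroup $N \trianglelefteq G$ of order $8$ with $N \cap G_3 = 1$ and $G/N$ nonabelian, and your sketch never produces one. This is the crux of the proposition, not bookkeeping.

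The step can be repaired within your framework. Let $C = C_G(G_3) = \ker\varphi$; it is normal of index $2$ in $G$ and contains $G_3$ in its center. Writing $C = G_3 Q$ for a Sylow $2$-subgroup $Q$ of $C$ (of order $8$), centrality of $G_3$ in $C$ forces $Q \trianglelefteq C$, so $Q$ is the unique Sylow $2$-subgroup of $C$, hence characteristic in $C$, hence normal in $G$. Then $|G/Q| = 6$; since $G_3 \cap Q = 1$ and any $g \notin C$ inverts $G_3$, the quotient $G/Q$ is nonabelian, so $G/Q \simeq \Sym_3$. The quotient map is your $f$, and it is injective on $G_3$, which is exactly what your part (b) needs. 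For comparison, the paper avoids this issue entirely: when $G_2$ is not normal, Sylow's theorem gives exactly three Sylow $2$-subgroups, the conjugation action on them defines $f \colon G \to \Sym_3$, and a counting argument (the kernel is the intersection of the three distinct Sylow $2$-subgroups, so it has order $\leqslant 8$, while it must have order $\geqslant 48/6 = 8$) shows that $f$ is surjective with kernel of order $8$; injectivity of $\pi \times f$ in (b) then follows from coprimality of the two kernels, just as in your argument.
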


\begin{proof} (a) If $G_2$ is normal, then (i) holds. If $G_2$ is not normal, then by Sylow's Theorem,
there exist exactly three $2$-Sylow subgroups. Denote them by $G_2$, $G_2'$ and $G_2''$. Moreover, 
each of these $2$-Sylow subgroups is self-normalizing. The conjugation action of $G$ on the $2$-Sylow subgroups
give rise to a group homomorphism $f \colon G \to \Sym_3$. The kernel $K$ of $f$ is the intersection $G_2 \cap G_2' \cap G_2''$.
Since each of the groups $G_2, G_2'$ and $G_2''$ has order $16$, and they are distinct, we have
$|K| \leqslant 8$. On the other hand, 
\[ |K| \geqslant \frac{|G|}{|\Sym_3|} = \frac{48}{6} = 8 \, . \]
We conclude that $|K| = 8$ and $f$ is surjective. In other word, (ii) holds. Now observe that (i) and (ii) are incompatible:
in case (i), $G_3 \simeq C_3$ is central, whereas in case (ii) $G_3$ cannot be central, since $f(G_3) = \Alt_3$
is not central in $\Sym_3$.

\smallskip
(b) In case (i), part (b) is obvious. In case (ii), consider the homomorphism $(\pi \times f) \colon G \to G/G_3 \times \Sym_3$, where $\pi : G \to G/G_3$ is canonical epimorphism.
Note that homomorphism $(\pi \times f)$ is injective. Indeed, the kernel $K$ of $f \colon G \to \Sym_3$ has order $8$ and the kernel of $\pi \colon G \to G/G_3$ is of order $3$. Thus the intersection of the two kernels is trivial, and $\pi \times f$ is, indeed, injective. Finally observe that $\pi$ induces an isomorphism between $G_2$ and $G/G_3$.

\smallskip
(c) $\ed(G) \leqslant \ed(G_2 \times \Sym_3) \leqslant \ed(G_2) + \ed(\Sym_3) = \ed(G_2) + 1$.
\end{proof}

\bibliographystyle{amsalpha}
\bibliography{ED}

\end{document}